\DeclareMathOperator*{\argmin}{arg\,min}%
\tikzset{
  basic/.style  = {draw, text width=9cm, drop shadow, font=\sffamily, rectangle},
  root/.style   = {basic, rounded corners, align=center,
                   fill=blue!20},
  level 2/.style = {basic, rounded corners=6pt, thin,align=center, fill=green!10,
                   text width=9em},
  level 3/.style = {basic, thin, align=left, fill=pink!60, text width=5em}
}
\theoremstyle{thmstyleone}%
\newtheorem{theorem}{Theorem}[section]
\newtheorem{proposition}{Proposition}[section]%
\newtheorem{corollary}{Corollary}[theorem]
\theoremstyle{thmstyletwo}%
\newtheorem{remark}{Remark}%
\theoremstyle{thmstylethree}%
\newtheorem{definition}{Definition}%
\newcommand{\R}{\mathbb{R}}
\newcommand{\U}{\mathcal{U}}
\newcommand{\weak}{\rightharpoonup}
\numberwithin{equation}{section}
\begin{document}

\title[Shortest-path recovery from signature]{Shortest-path recovery from signature with an optimal control approach}


\author*[1,2]{\fnm{Marco} \sur{Rauscher}}\email{marco.rauscher@tum.de}

\author[1,3]{\fnm{Alessandro} \sur{Scagliotti}}\email{scag@ma.tum.de}

\author[1]{\fnm{Felipe} \sur{Pagginelli Patricio}}\email{felipe.pagginelli@tum.de}

\affil[1]{\orgdiv{CIT School}, \orgname{TU Munich}, \orgaddress{\street{Boltzmannstr. 3}, \city{Garching}, \postcode{85748}, \country{Germany}}}
\affil[2]{\orgname{Munich Data Science Institute (MDSI) 
}, \orgaddress{\city{Munich}, \country{Germany}}}
\affil[3]{\orgname{Munich Center for Machine Learning (MCML)}, \orgaddress{\city{Munich}, \country{Germany}}}


\abstract{In this paper, we consider the signature-to-path reconstruction problem from the control theoretic perspective. 
Namely, we design an optimal control problem whose solution leads to the minimal-length path that generates a given signature. 
In order to do that, we minimize a cost functional consisting of two competing terms, i.e., a weighted final-time cost combined with the $L^2$-norm squared of the controls. 
Moreover, we can show that, by taking the limit to infinity of the parameter that tunes the final-time cost, the problem $\Gamma$-converges to the problem of finding a sub-Riemannian geodesic connecting two signatures.
Finally, we provide an alternative reformulation of the latter problem, which is particularly suitable for the numerical implementation.}

\keywords{Shortest-path reconstruction, Signatures, $\Gamma$-convergence, Optimal control.}



\maketitle

\section{Introduction}\label{sec1}

In this paper, we suggest a technique for recovering the path with the shortest length among all paths with a given \emph{signature}, whose definition was first proposed in \cite{chen1957integration}. 
We restrict ourselves to the subset of continuous paths that have bounded variation
\begin{equation*}
    BV([0,1],\R^d) := \left\{x\in C([0,1],\mathbb{R}^d):  x(0)=0, \sup_{\mathcal{P}\in D([0,1])} \sum_{i\in \mathcal{P}} |x_{t_{i+1}}- x_{t_i}|<\infty \right\},
    \end{equation*}
where $D([0,1])$ denotes all possible finite partitions of the interval $[0,1]$. Then, the classical signature is a map from $BV([0,1],\R^d)$ to a real tensor space defined as follows for every $0\leq s <t\leq 1$:
    \begin{align*}
    &S_N^{s,t}:BV \rightarrow G^N(\mathbb{R}^d) \subset T^N(\R^d):= \bigoplus_{k =1}^N (\mathbb{R}^d)^{\otimes k }\\
        &S_N^{s,t}: x(\cdot) \mapsto \bigg(1, \int_{s\leq u\leq t}dx_u,\dots, \int_{s\leq u_1 \leq \dots \leq u_N\leq t} dx_{u_1} \otimes \dots \otimes dx_{u_N}\bigg).
    \end{align*}
The image of $BV$ under the signature map forms a subset in the tensor space, which we denote by $G^N(\mathbb{R}^d)$. We refer the reader to \cite[Chapter~7]{friz2010} for a gentle introduction to the rich algebraic structure of this space.
One of the properties described in this reference is that the image of the signature map forms a Lie group, and therefore it can be seen as a topological manifold. 

The reason for the interest in signatures comes from many different applications. One recent problem in finance is to generate time series in order to find hedging strategies for different scenarios, as can be seen in \cite{buehler2020data, buehler2019deep}. This method needs several different paths that come from the same distribution, representing different situations to be taken into account in the hedging strategy. 
One approach for this task is to generate several log-signatures sampled from the same distribution, and then try to invert them back into paths. For further recent applications to mathematical finance, we refer the interested reader to \cite{futter2023signature,horvath2023optimal}. 
Generally, the concept of signature comes from rough path theory (see \cite{lyons1998differential}), and it is now exploited in the Machine Learning community in different ways. An interesting direction is the embedding in kernel theory, where the signature serves as a feature map. The corresponding Reproducing Kernel Hilbert Space is universal to continuous functions on the path space, while the mean embedding characterizes distributions over the path space \cite{chevyrev2022signature,salvi2021higher}. An approximation scheme to derive the signature itself was for example given in \cite{kiraly2019kernels}. 
Moreover, we mention that in \cite{kidger2020neural} a continuous-time analogue of Recursive Neural Networks (RNN) was introduced to deal with predictions based on observations from rough paths. 
Therefore, in \cite{kidger2020neural} the authors considered controlled integral equations of the form
\begin{equation*}
    z(t) = z(t_0) + \int_{t_0}^t f_{\theta}(z(s))\, dx_s,
\end{equation*}
where $s\mapsto x_s\in \R^v$ is a spline constructed using instant evaluations of a rough path. In their model, $f_{\theta}:\R^w\to \R^{w\times v}$ is a parametric vector field, and $\theta$, \emph{which is not allowed to depend on the time}, is chosen by solving the minimization problem related to the design of the dynamics. 
We insist on the fact that in our problem, the controlled fields are fixed and prescribed by the geometry of the space of signatures, and we obtain the path (or, more precisely, its velocity) by solving an optimal control problem.


The aim of the present article is to find the shortest connection between two signatures contained in the manifold, i.e., we are interested in a geodesic problem in the space of signatures. 
For this reason, we equip the space $G^N(\R^d)$ with a sub-Riemannian structure induced by the control system:
    \begin{align}\label{eq:int_subR_sys}
        \dot{\xi}(s) &= \sum_{i=1}^d a_i(s) U_i(\xi(s)) \ \mbox{for a.e. } s\in[0,1],\ a_i \in L^2([0,1],\R),  
    \end{align}
where $U_1,\ldots, U_d$ are the vector fields providing at every point an \emph{orthonormal frame} for the sub-Riemannian distribution, and they are defined as follows:
    \begin{equation*}
    U_i:G^N(\mathbb{R}^d) \to T^N(\mathbb{R}^d), \  U_i:\xi \mapsto \xi \otimes e_i,\ i\in\{1,\dots,d\}.
    \end{equation*}
For the basic notions on sub-Riemannian geometry, we recommend \cite[Chapter~3]{agrachev2019comprehensive}. However, we report that in the present paper, we do not make use of advanced concepts of sub-Riemannian geometry. \\
Given an admissible control $a=(a_1,\ldots,a_d)\in L^2([0,1],\R^d):=  \U$, we can consider the solution of \eqref{eq:int_subR_sys} with starting point $\xi(0)=(1,0,\ldots,0)$.
Then, if we interpret $a_i, i\in \{1,\dots,d\}$ as the $i-$th component of the time derivative of a path $x\in {H}^1([0,1],\R^d)$, i.e.,
\begin{equation} \label{eq:cor_path_intro}
    \begin{cases}
        \dot x_i(s) = a_i(s) \quad \mbox{a.e. in }  [0,1] & i=1,\ldots,d  ,\\
        x(0) = 0,
    \end{cases}
\end{equation}
it turns out that $S_N^{0,1}(x) = \xi(1)$ {(see \cite[Proposition~7.8]{friz2010})}. 
In Section~\ref{sec:prereq} we explain that every signature in $G^N(\R^d)$ can be generated by a path belonging to the space $H^1([0,1],\mathbb{R}^d)$.
While Rashevsky–Chow Theorem ensures the existence of a connection between any two signatures in the manifold, it does not provide the shortest one. Since there are several possible curves linking $g_0$ and $g_1$ in $G^N(\mathbb{R}^d)$, the aim of the present contribution is to provide a numerical approach to approximate the length-minimizer geodesic.

Generally, the concept of geodesics is related to the definition of distance between two points of a manifold, and to the notion of the length of a curve. A length-minimizer geodesic is a curve connecting two elements in the manifold whose length is minimal. 
Roughly speaking, the idea of constructing such geodesics is to move efficiently in the manifold along the directions specified by the orthonormal vector fields.
In our case, the orthonormal fields are the $U_1,\ldots,U_d$ appearing at the right-hand side of \eqref{eq:int_subR_sys}, and we observe that the weights (or \emph{controls}) $a_i,\ i\in \{1,\dots,d\}$ determine the movement in $G^N(\mathbb{R}^d)$. 
Given a control $a\in \U$, the length of the corresponding curve obtained by solving \eqref{eq:int_subR_sys} with $\xi(0)=g_0$ is  
\begin{equation} \label{eq:length_intro}
    \ell(\xi) = \int_0^1 |a(s)|_2\,ds=
    \int_0^1 \sqrt{ \sum_{i=1}^d |a_i(s)|^2 } \,ds.
\end{equation}
This length leads to the definition of the Carnot-Caratheodory metric, that induces a topology on $G^N(\mathbb{R}^d)$ equivalent to the standard one related to the Lie group structure, and it turns $G^N(\mathbb{R}^d)$ into a geodesic space (see \cite[Chapter~7]{friz2010}).
As a consequence of this notion of length, we relate the length-minimizing curves $s\mapsto \xi(s) \in G^N(\mathbb{R}^d)$ connecting $g_0=(1,0,\ldots,0)$ and $g_1=\bar g$ to the corresponding optimal control $a\in \U$, and, using \eqref{eq:cor_path_intro}, we construct a path in $s\mapsto x(s)\in \R^d$ such that $S_N^{0,s}(x(\cdot))=\xi(s)$ for every $s\in [0,1]$. In particular, we obtain that the above-mentioned curve $x\in H^1([0,1],\R^d)$ is \emph{the shortest path satisfying $S_N^{0,1}(x(\cdot)) = \bar g$}.
Having this in mind, we first introduce an appropriate optimal control problem to find the length-minimizer geodesics in $G^N(\mathbb{R}^d)$, and we solve it numerically. In a second step, we recover the corresponding path in $H^1$ using \eqref{eq:cor_path_intro} and the optimal controls computed in the first phase. 

Reconstructing the path from a given signature was already considered in the literature in e.g. \cite{pfeffer2019learning}, where the authors tackle the path recovery problem from the algebraic geometry perspective. The authors focus on signatures up to order three and propose a method for the exact recovery of paths with a predetermined number of pieces, that constitute a dictionary. These pieces can have different structures, e.g., polynomial or linear. 
Moreover, the paper suggests using an optimization routine for the recovery problem when the dictionaries of the underlying paths are large. 
In this scenario, the identifiability of a unique path given a predetermined signature is lost, hence in \cite[Section~9]{pfeffer2019learning} the extra ``degrees of freedom'' are saturated by penalizing the length.
Techniques from algebraic geometry have been fruitfully applied to signatures also by \cite{amendola2019varieties}, where the authors already proved a formula for the exact reconstruction of a path, but describe limitations of their approach in terms of the maximal order of the signature as well as the maximal number of segments of a path. 
The problem of path recovery was also addressed in \cite{lyons2018inverting} and \cite{lyons2017hyperbolic}, where the authors deal with the recovery of a piecewise linear path from a given signature, and they focus on the recovery of the direction as well as the length of each piece from the signature. 
In \cite{lyons2017hyperbolic} the differential equation fulfilled by the signature is solved for paths that are projected to a hyperbolic space. The benefit of this strategy for the reconstruction is that the curvature of the space provides information about the length and direction of a piece of a piecewise linear function. 
In \cite{lyons2018inverting}, the authors use a parametric form for each segment of a piecewise linear function, and adjust the parameters to match a given signature. 
{Another recent direction dealing with the inversion of the signature is given by the insertion method introduced in \cite{chang2019insertion} and investigated by e.g. \cite{fermanian2023insertion}.}

The tool we employ to recover the length-minimizing paths from a given signature is the algorithm from \cite{sakawa1980}, originally introduced to numerically address optimal control problems with an iterative procedure. In more detail, this algorithm relies on the necessary optimality conditions expressed in form of Pontryagin's Maximum Principle (described e.g. in \cite[Chapter~12]{agrachev2004control}). 
This paper merges tools from optimal control and the signature literature, and we focus on the applicability of several results from the classical optimal control theory to signatures. Our approach is advantageous since we do not have theoretical constraints concerning the order of the signature or the dimension of the path. 
Nevertheless, the implementations given in the numerics section contain natural limitations due to the fact that we deal with a discretized problem.
Another advantage of our method is that we do not focus on the piecewise linear structure of a discretized path, therefore, we can theoretically deal with more abstract paths.

First, we review several essential results and properties of the signature. Then, we introduce the related optimal control problem, and we propose several formulations, rigorously explaining how they relate to each other.
Moreover, we employ techniques of $\Gamma$-convergence to relax the hard-constraint of matching exactly the target signature. Namely, we introduce a weighted non-negative term that penalises the mismatch between the produced signature and the target one (soft-constraint), and we prove that we recover the original hard-constrained problem if we let the weight tend to infinity. 
Subsequently, we state the Pontryagin Maximum Principle (PMP), which gives us the necessary minimality conditions for the optimal control problem. The remaining sections deal with versions of the algorithm from \cite{sakawa1980} which exploits the PMP to find the shortest connection between two signatures. 
More precisely, we investigate the recovery of the geodesic in $G^N(\R^d)$ and of the related path.
The last section gives some illustrative examples of realizations of stochastic processes and concludes with a comparison to their shortest path that we were able to recover from the signatures.

\section{Preliminaries on Signatures}\label{sec:prereq}

In this section, we introduce some preliminary results about signatures that are helpful to embed this theory in the optimal control framework. 
Moreover, this section should motivate our approach for solving the optimization problem that we tackle in the subsequent sections. Although we follow the structure of Chapter 7 in \cite{friz2010}, we refer to this book for a more in-depth introduction to the topic. 

Since the signature maps paths to an abstract tensor space, we need to clarify what paths we consider in this work before properly defining the signature. 
As already mentioned in the introduction, when we optimize for the geodesics, we restrict ourselves to the subset of continuous paths whose first derivative is in $L^2([0,1],\mathbb{R}^d)$. More precisely, we consider the space
    \begin{equation*}H^1([0,1],\mathbb{R}^d) := \{x\in C([0,1],\mathbb{R}^d): \ x(0)=0,\ \dot{x}\in L^2([0,1],\ \mathbb{R}^d)\}.\end{equation*}
On the other hand, we the signature can be defined for more general continuous paths that have bounded variation and that are denoted by
    \begin{equation*}BV := \{x\in C([0,1],\mathbb{R}^d): \ x(0)=0, \sup_{D([0,1])} \sum_{i\in D([0,1])} |x_{t_{i+1}}- x_{t_i}|<\infty \}.\end{equation*}
We are now prepared to define the signature according to \cite[Definition~7.2]{friz2010}.  

\begin{definition}[Signature]

For a given d-dimensional path $x\in BV([s,t],\mathbb{R}^d)$ the signature up to order $N$ is given by
\begin{equation*}S_N(x)_{s,t} := \bigg(1, \int_{s\leq u\leq t}dx_u,\dots, \int_{s\leq u_1 \leq \dots \leq u_N\leq t} dx_{u_1} \otimes \dots \otimes dx_{u_N}\bigg)\in \bigoplus_{k=1}^N (\mathbb{R}^d)^{\otimes k }\end{equation*}
\end{definition}

We notice that the state space of the signatures up to order $N$ is contained in the tensor space $T^N(\mathbb{R}^d) := \bigoplus_{k=1}^N (\mathbb{R}^d)^{\otimes k }$. We denote the exact state space of signatures generated from $BV$ as $\tilde G^N(\mathbb{R}^d)$. The product in this space is called the tensor product between two elements $g,h\in T^N(\mathbb{R}^d)$ and is defined as follows:
    \begin{equation*}
    \pi_k(g \otimes h) = \sum_{i=1}^k \pi_i(g) \otimes \pi_{k-i}(h),\ \forall k\in [N],
    \end{equation*}
where, for arbitrary $g,h\in T^N(\mathbb{R}^d)$, we indicate the projection to the $k$-th component of $g\in T^N(\mathbb{R}^d)$ as $\pi_k(g) = g^k$. 

One of the crucial properties is that curves of signatures solve an integral equation for all paths contained in $BV$.
Moreover, in the case the underlying path has the Sobolev regularity $H^1$, the corresponding curve of signatures solves the ODE that we report below. We observe that considering Sobolev-regular paths is not restrictive for our scope, as we shall see in the sequel of this section. 
We denote the corresponding space of signatures generated from $H^1$ as ${G}^N(\mathbb{R}^d)$.   

\begin{theorem}\label{control_sys}
For $x\in H^1([0,1],\mathbb{R}^d)$ such that $a_i := [\dot{x}]_i$, the curve of signatures, $\xi(\cdot):=S_N(x)_{0,\cdot}$, solves the following ODE
    \begin{equation} \label{eq:control_syst}
        \begin{split}
        \dot{\xi}(s) &= \sum_{i=1}^d a_i(s) U_i(\xi(s)) , s\in[0,1], \\
        \xi(0) &= (1,0,\dots,0)\in G^N(\mathbb{R}^d),
        \end{split}
    \end{equation}
where the $d$ vector fields $U_1,\ldots,U_d$ are defined as 
    \begin{equation}\label{vec_fields}
        U_i:{G}^N(\mathbb{R}^d) \rightarrow T^N(\mathbb{R}^d), \quad  U_i:\xi \mapsto \xi \otimes e_i,\ i\in\{1,\dots,d\}.
    \end{equation}
\end{theorem}

\begin{proof}
    See \cite[Proposition~7.8]{friz2010}.
\end{proof}

The vector fields defined in \eqref{vec_fields} have sublinear growth, as we can observe the following estimate
    \begin{equation}\label{sublingr}
        \sup_{i\in\{1,\dots,d\}} |U_i(\xi)| = \sup_{i\in\{1,\dots,d\}} |\xi \otimes e_i| \leq |\xi|+1,
    \end{equation}
for every $\xi$.
{Furthermore, given controls $(a_1, \ldots, a_d)\in L^2([0,1], \R^d)$, the condition \eqref{sublingr} implies that there exists a $C>0$ such that }
    \begin{equation}\label{assumpt_bounded_sig}
        ||\xi||_{C^0}\leq C, \ \text{where } ||\xi||_{C^0} := \sup_{s\in[0,1]} |\xi(s)|.
    \end{equation}
Combining \eqref{assumpt_bounded_sig} and \eqref{sublingr} we can derive the following bound
    \begin{equation}\label{bound_g}
        \|\dot{\xi}\|_{L^2}^2 \leq \int_0^1 \sum_{j=1}^d |U_i(\xi(s))|^2 |a_j(s)|^2 \, ds \leq (1+C)^2  C_a^2 d,
    \end{equation}
if $||a||_{L^2}\leq C_a$. Note, that \eqref{assumpt_bounded_sig} and \eqref{bound_g} show that $g$ takes elements in the Sobolev space $\mathbb{X}:= H^1([0,1],G^N(\mathbb{R}^d))$. We denote this space by $\mathbb{X}$ to avoid confusion with $H^1([0,1],\mathbb{R}^d)$, which we sometimes abbreviate as $H^1$ .

Next, we recall Chen's relation which relates a concatenation of two different paths on the level of signatures. 

\begin{theorem}[Chen]\label{Chen}

Given $x\in BV([0,T_1],\mathbb{R}^d)$ and $y \in BV([0,T_2],\mathbb{R}^d)$, define the concatenation $x\sqcup y \in BV([0,T_1+T_2],\mathbb{R}^d)$ of these two paths as follows. 

\begin{equation*}x \sqcup y(s) = \begin{cases}  x(s), \ s\in[0,T_1] \\
y(s-T_1) - y(0) + x(T_1) , \ s\in [T_1,T_1+T_2].
\end{cases}\end{equation*}

Then the following equation holds:

\begin{equation*}
    S(x \sqcup y) = S(x)\otimes S(y).
\end{equation*}
\end{theorem}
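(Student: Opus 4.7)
The plan is to verify the identity one tensor level at a time. Fix $k \in \{1,\ldots,N\}$ and consider the $k$-th component of $S(x \sqcup y)$, defined as an iterated integral over the simplex $\Delta_k := \{0 \leq u_1 \leq \ldots \leq u_k \leq T_1 + T_2\}$ against $d(x\sqcup y)$. A preliminary observation is that on $[0,T_1]$ the differential of the concatenation coincides with $dx$, while on $[T_1, T_1+T_2]$ it coincides with the shifted differential $dy(\cdot - T_1)$; the additive constant $x(T_1)-y(0)$ introduced in the definition of $x\sqcup y$ only serves to enforce continuity at the break point and is annihilated by differentiation, so it plays no role inside any integral.

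The decisive step is to partition $\Delta_k$ into the $k+1$ sub-simplices
\begin{equation*}
\Delta_k^j := \{0 \leq u_1 \leq \ldots \leq u_j \leq T_1 \leq u_{j+1} \leq \ldots \leq u_k \leq T_1 + T_2\}, \qquad j=0,\ldots,k,
\end{equation*}
which cover $\Delta_k$ up to a negligible set. On $\Delta_k^j$ the first $j$ differentials involve only $x$ and the remaining $k-j$ involve only $y$, so a Fubini-type argument makes the iterated integral split as a tensor product; after the change of variable $v_i := u_{j+i} - T_1$ for $i=1,\ldots,k-j$, the contribution indexed by $j$ is exactly $\pi_j(S(x)_{0,T_1}) \otimes \pi_{k-j}(S(y)_{0,T_2})$.

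Summing over $j \in \{0,\ldots,k\}$ and comparing with the definition of the tensor product on $T^N(\R^d)$, whose $k$-th projection is precisely the convolution $\sum_{j} \pi_j(g) \otimes \pi_{k-j}(h)$, yields the claimed identity at level $k$. Since $k$ is arbitrary, and the level-$0$ components on both sides are trivially equal to $1$, Chen's relation follows.

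No serious analytical obstruction is expected: the $BV$ regularity of $x$ and $y$ is exactly what guarantees that the iterated Riemann--Stieltjes integrals are well defined and that the Fubini-type interchanges needed on each $\Delta_k^j$ are legitimate. The only true care point is notational, namely aligning the combinatorics of the simplex decomposition on the path side with the graded-convolution structure of the tensor product on the algebra side.
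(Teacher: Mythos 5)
Your argument is correct, and it is essentially the classical proof of Chen's relation: the paper itself does not prove the statement but simply defers to \cite[Theorem~7.11]{friz2010}, and the proof given there proceeds exactly along your lines, namely the decomposition of the simplex $\Delta_k$ according to how many integration variables fall before the break point $T_1$, followed by a Fubini-type factorization and the shift of variables on the second block. Two small points are worth making explicit. First, the claim that the sub-simplices $\Delta_k^j$ overlap only on a negligible set uses that $x\sqcup y$ is continuous (which the additive constant in the definition guarantees), so the Lebesgue--Stieltjes measures of its components carry no atom at $T_1$; for merely c\`adl\`ag $BV$ paths this would require more care, but in the setting of the theorem it is fine and deserves the one-line justification. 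Second, your convolution formula correctly runs over $j\in\{0,\dots,k\}$ with the level-$0$ components equal to $1$; note that the paper's displayed definition of $\pi_k(g\otimes h)$ sums only over $i=1,\dots,k$, so when comparing with the statement you should point out that the boundary terms $j=0$ and $j=k$ are accounted for by the scalar component $\pi_0=1$ of the signature, as in the usual convention. With these remarks, your proof is complete and self-contained, which is arguably a gain over the paper's bare citation.
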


\begin{proof}
     See \cite[Theorem~7.11]{friz2010}.
\end{proof}

\subsection{Sub-Riemanian Structure} 

In this subsection, we explain the inherent structure of the vector fields which can be identified with a specific Lie algebra. This identification provides the natural structure of more general vector fields and allows us to use additional types of vector fields as introduced in \ref{control_sys}. This can lead to a more precise description of the control system. 

As described in \cite[Chapter~7]{friz2010} the state space ${G}^N(\mathbb{R}^d)$ forms a Lie group w.r.t. $\otimes$ and therefore it is a smooth manifold. This means that there exists a corresponding Lie algebra $\mathfrak{t}(\mathbb{R}^d)$ which can be seen as the tangent space in the neutral element of the Lie group. Besides the scalar multiplication and the standard plus operations, the Lie algebra is additionally closed under the bracket operation defined as follows

\begin{definition}[Bracket]\label{brackets}

Consider two elements $g,h\in \mathfrak{t}(\mathbb{R}^d)$, the bracket is then defined as
\begin{equation*}
    (g,h) \rightarrow [g,h]:= g\otimes h - h\otimes g \in  \mathfrak{t}(\mathbb{R}^d).
\end{equation*}
\end{definition}

With this definition, we can also see the following two implications for all $g,h,k\in \mathfrak{t}(\mathbb{R}^d)$:
\begin{equation*}
    [g,h] = -[h,g] \text{ and } [g,[h,k]]+[h,[k,g]] + [k,[g,h]] = 0.
\end{equation*}
Any vector space $(\mathcal{V},+,.)$ equipped with a bracket as defined above is called a Lie algebra. Thus, $(\mathfrak{t}^N(\mathbb{R}^d),+,.,[\cdot,\cdot])$ is a Lie algebra. Next, we extend the notion of a single bracket to formulate the definition of nilpotent Lie algebra. 

\begin{definition}[Nilpotent Lie algebra of order $N$]\label{lie_algebra}
    Consider $\mathfrak{g}^N(\mathbb{R}^d)\subset \mathfrak{t}^N(\mathbb{R}^d)$ defined as the smallest Lie subalgebra of $\mathfrak{t}^N(\mathbb{R}^d)$ which contains $\mathbb{R}^d$:
    \begin{equation*}
    \mathfrak{g}^N(\mathbb{R}^d) = \mathbb{R}^d \oplus [\mathbb{R}^d,\mathbb{R}^d]\oplus \dots \oplus [\mathbb{R}^d,[\dots,[\mathbb{R}^d,\mathbb{R}^d]]].
    \end{equation*}
    The set $\mathfrak{g}^N(\mathbb{R}^d)$ is called the nilpotent Lie algebra of order $N$, whose basis elements can be formed as 
    \begin{align*}
        e_\alpha &= [e_{\alpha_1},[e_{\alpha_2},[\dots,[e_{\alpha_{k-1}},e_{\alpha_k}]]]
        \in [\mathbb{R}^d,[\dots,[\mathbb{R}^d,\mathbb{R}^d]]] \subset (\mathbb{R}^d)^{\otimes k}, k\leq N.
    \end{align*}
\end{definition}

One way of establishing a connection between the Lie group and the Lie algebra defined above is via the exponential map. 

\begin{definition}[Exponential]\label{exponential}
The exponential map is defined by 
\begin{align*}
    \exp:\, & \mathfrak{t}^N(\mathbb{R}^d)\rightarrow 1+\mathfrak{t}^N(\mathbb{R}^d),\\
     \exp:\, &a \mapsto 1+\sum_{i=1}^N \frac{a^{\otimes k}}{k!}.
\end{align*}
This sum is only taken up to the $N$-th order since we only consider tensors up to order $N$. 
\end{definition}

A simple calculation gives us that the signature of a linear path coincides exactly with the exponential function of Definition \ref{exponential}, i.e.,
\begin{equation}\label{lin_example}
    x:[0,1] \rightarrow \mathbb{R}^d,\ x:t \rightarrow tv \implies S(x) = \exp(v).
\end{equation}
 This relation is exploited by the Rashevsky–Chow Theorem which determines the precise state space of the signatures $G^N(\mathbb{R}^d)$. 
\begin{theorem}\label{Chow}
The following statements hold:
\begin{enumerate}
    \item \emph{(Rashevsky-Chow)} For any $g\in \exp(\mathfrak{g}^N(\mathbb{R}^d))$ there exist $v_1,\dots,v_m \in \mathbb{R}^d$ such that
\begin{equation*}g=e^{v_1}\otimes \dots \otimes e^{v_m}.
\end{equation*}
\item If we set $G^N(\mathbb{R}^d):= \{ S_N(x)_{0,1}:x\in H^1([0,1],\mathbb{R}^d)\}$, we have that 
        \begin{equation*}G^N(\mathbb{R}^d)=\exp(\mathfrak{g}^N(\mathbb{R}^d)),\end{equation*}
    where $\mathfrak{g}^N(\mathbb{R}^d)$ is the closed sub-Lie algebra of $\mathfrak{t}^N((\R^d),\otimes)$ introduced in Definition~\ref{lie_algebra}. 
\end{enumerate}
\end{theorem}

\begin{proof}
     For the first result, see \cite[Theorem~7.28]{friz2010}.
     Regarding the second point, in \cite[Theorem~7.30]{friz2010} the statement was originally proved for the space of paths $x\in BV([0,1],\mathbb{R}^d)$, i.e. continuous functions with finite variation. More precisely, if we consider the set of all signatures given as \begin{equation*}
     \Tilde{G}^N(\mathbb{R}^d):= \{S_N(x)_{0,1}:x\in BV([0,1],\mathbb{R}^d)\},
     \end{equation*}
   then the argument in \cite[Theorem~7.30]{friz2010} shows the following relations:
        \begin{equation*}
        \exp(\mathfrak{g}^N(\mathbb{R}^d)) \subset \Tilde{G}^N(\mathbb{R}^d) \subset  \exp(\mathfrak{g}^N(\mathbb{R}^d)). \end{equation*}
   In our case, we have the inclusions $\exp(\mathfrak{g}^N(\mathbb{R}^d)) \subset G^N(\mathbb{R}^d) \subset \Tilde{G}^N(\mathbb{R}^d)$, from where we can conclude. The first inclusion descends from Rashevsky–Chow, since every element in $\exp(\mathfrak{g}^N(\mathbb{R}^d))$ can be constructed as the signature of the concatenations of linear paths, which are contained in $H^1$.
   The second inclusion follows from the fact that
    $H^1 \subset BV 
    $.
\end{proof}


The last result shows that any signature generated by a path in $BV$ can be represented as the signature of a piecewise linear path or of a path contained in $H^1$. 
Moreover, there is a deep link between the left-invariant vector fields and the nilpotent Lie algebra. 

\begin{remark}[Sub-Riemannian structure of $G^N(\mathbb{R}^d)$]\label{sub_riemann}
An additional consequence of the Rashevsky–Chow Theorem is that any two points in $G^N(\mathbb{R}^d)$ can be connected by a curve 
\begin{equation}\label{curve}
    s\mapsto \xi(s) \in H^1([0,1],G^N(\mathbb{R}^d))
\end{equation} 
such that for every $s\in[0,1]$ the corresponding element $\xi(s)$ on this curve stays in $G^N(\mathbb{R}^d)$. 
This is because the vector fields $\xi\mapsto U_i(\xi) = \xi \otimes e_i$ $i=1,\ldots,d$ generate a Lie algebra that, point by point, coincides with the tangent space: 
\begin{equation}\label{tangent_lie}
    \mathcal{T}_{g} G^N(\mathbb{R}^d) = \mathrm{Lie}[U_1,\dots,U_d]|_g.
\end{equation}
A sub-Riemannian metric on $\mathcal{T}_g G^N(\mathbb{R}^d)$ can be introduced by taking $\{U_1,\dots,U_d\}$ to be, point by point, an orthonormal frame. 
\end{remark}

\subsection{Carnot-Caratheodory Metric}

We now recall the result that introduces the Carnot-Caratheodory (CC) metric. 

\begin{theorem}\label{CC}
    The Carnot-Caratheodory norm for any element $g\in G^N(\mathbb{R}^d)$ is given by 
    \begin{equation} \label{eq:inf_cc}
    \|g \|_{CC}:=\inf \bigg\{ \int_0^1|\dot{y}(s)|ds: y\in H^1([0,1],\mathbb{R}^d) \text{ and } S_N(y)_{0,1}
     = g\bigg\}.
     \end{equation}
    This infimum is always achieved, and the corresponding path can be parametrized to be Lipschitz continuous and of constant speed $|\dot{y}^*(r)|=c$ for every $ r\in[0,1]$.\\
    Moreover, $G^N(\mathbb{R}^d)$ equipped with the CC metric is a geodesic space. More precisely,
    given two elements $g,h\in G^N(\mathbb{R}^d)$, a geodesic connecting them is
    \begin{equation*}t\in [0,1] \rightarrow \mathcal{Y}_t:= g\otimes S_N(y^*)_{0,t}\end{equation*}
where $y^*$ is the path realizing the $\inf$ in \eqref{eq:inf_cc}  associated to $\|g^{-1}\otimes h\|_{CC}$.
\end{theorem}

\begin{proof}
    See \cite[Theorem~7.32]{friz2010}. We mention that the fact that the infimum is taken over a non-empty descends from Rashevsky–Chow, since it implies that there is always a piecewise linear path whose signature coincides with $g$. 
    The second part of the statement descends from \cite[Proposition~7.42]{friz2010}.
\end{proof}

In the final Remark of this section below we mention the starting point for our approach to determine the shortest-length paths with a given signature. In other words, this means finding the minimizer of the CC metric of a given signature.

\begin{remark}

The geodesic from above satisfies the differential equation of Theorem \ref{control_sys} for the left-invariant vector fields  $\xi\mapsto U_i(\xi) = \xi \otimes e_i$ $i=1,\ldots,d$ whose Lie algebra coincides with the entire tangent space at the point $\xi$. 
If we consider an element tangent to the $span\{U_1(\xi,\dots,U_d(\xi)\}$, then we can measure its norm as follows:
    \begin{equation*}
    v= \sum_{i=1}^d a_i U_i(\xi) \implies \|v\| = \sqrt{\sum_{i=1}^d a_i^2}.
    \end{equation*}
Using this norm we can measure the length of the whole curve in \eqref{curve} as the CC norm introduced in Theorem \ref{CC} suggests.
\end{remark}

\section{Embedding in Optimal Control Theory}

\subsection{Problem Formulation}\label{section_prob_form}

This section is devoted to the formulation of the optimal control problem related to the shortest path among all paths in $H^1$ whose signatures coincide. First of all, we have to define the exact objective functional whose minimizers are paths of minimal length such that their signatures coincide with an assigned one.
We formulate this task as an optimal control problem where the controls $a_i := [\dot{x}]_i \in\mathbb{R}, i \in\{1,\dots,d\}$ are the first derivative of the path $x\in H^1([0,1],\mathbb{R}^d)$.
We already described the relation between the controls and the signatures, so that we can interpret the integral over the controls as the length of the corresponding curve in the signature space $\xi \in \mathbb{X}:= H^1([0,1],G^N(\mathbb{R}^d))$. Namely, we have that:
    \begin{align}\label{def_len}
        \ell(\xi) = \int_{t=0}^1 |\dot{x}(t)| dt =  \int_{t=0}^1 \sqrt{\sum_{i=1}^d |a_i(t)|^2} dt.
    \end{align} 
It is important to observe that the length is invariant under reparametrizations. To see this, we consider a reparameterization, which is a monotone increasing, smooth function $\psi:[0,T]\rightarrow [0,1]$ and compose as follows
    \begin{align*}
         \ell(\xi) = \int_{t=0}^1 |\dot{x}(t)| dt  = \int_{\psi^{-1}(0)=0}^{\psi^{-1}(1)} |\dot{x}(\psi(s))| \ |\dot{\psi}(s)| ds = \ell(\xi \circ \psi) .
    \end{align*} 
Given this notion of the length of a path, the optimal control problem reads as 
    \begin{equation}\label{min}
        \min_{\xi\in \mathbb{X}} \ell(\xi),
    \end{equation}
such that $\xi \in \mathbb{X}$ fulfills the constraints defined in \eqref{control_sys} and additionally $\xi(1) = \bar g$. 
{As already explained in Section~\ref{sec:prereq}, there are always admissible controls due to the Rashevsky–Chow result (Theorem~\ref{Chow}). 
Moreover, we note that minimizing the length over admissible controls coincides exactly with finding the norm introduced in Theorem~\ref{CC}.}
For numerical matters, it can sometimes be more convenient to work with a similar functional strictly related to the length whose minimizers coincide with the ones in \eqref{min}. The \emph{energy functional} is often denoted as $J: L^2([0,1],\R^d) \rightarrow \mathbb{R}$ takes the following form:
    \begin{equation}\label{def_energy}
        J(a)  = \frac{1}{2} \int_0^1 |a(t)|^2 dt = \frac{1}{2} \int_0^1 |\dot{x}(t)|^2 dt.
    \end{equation}
The length $\ell$ and the energy functional $J$ can be related by the following Theorem.

\begin{theorem}\label{equivalence_of_probs}
    Consider Sobolev-regular curves $\mathbb{X} \ni \xi:[0,T]\to G^N(\R^d) $ such that $\xi(0)=(1,0,\dots,0)$ and $\xi(T)=\bar{g}$, and let $a\in \U$ be the admissible control that generates $\xi$. Then the following conditions are equivalent:
    \begin{itemize}
        \item[1)] The admissible controls $a\in \U$ are minimizing the energy $J.$ 
        \item[2)] A curve $\xi \in \mathbb{X}$ is a minimizer of $\ell$ whose controls fulfil $|a(t)| = c>0$, for almost every  $t\in [0,T]$.
        \item[3)] Consider the problem of finding the minimal time $\hat T$ such that there exists a Sobolev-regular curve $\hat \xi: [0,\hat T]\to G^N(\R^d)$ with $\hat \xi (0)=(1,0,\ldots,0)$ and  $\hat \xi (\hat{T}) =\bar{g}$, whose controls fulfils $|\hat a(t)| = 1$ for almost every $t\in [0,\hat{T}]$. Then, $\xi$ is a reparametrization of $\hat \xi$.
    \end{itemize}
\end{theorem}
\begin{proof}
    We start by showing that with a minimizer of 2) we can construct a minimizer of 3).
    Let $\xi:[0,T]\to G^N(\R^d)$ be a length-minimizer connecting $\xi(0)$ to $\xi(T)=\bar g$, whose constrol satisfies $|a(t)|=c>0$ a.e. in $[0,T]$. 
    Let us set $\hat T:= cT$, and let us define $\hat \xi:[0,\hat T]\to G^N(\R^d)$ as $\hat \xi(t):= \xi(t/c)$.
    We observe that
    \begin{equation*}
        \frac{d}{dt}\hat \xi (t) = \frac1c \dot \xi(t/c)=
        \sum_{i=1}^d \frac{a_i(t/c)}{c} U_i(\xi(t/c))
        = \sum_{i=1}^d \frac{a_i(t/c)}{c} U_i(\hat \xi(t)),
    \end{equation*}
    yielding $|\hat a(t)|= \frac1c |a(t/c)|=1$ a.e. in $[0,\hat T]$. In particular, this implies that $\ell(\xi) = \ell(\hat \xi) = \hat T$, where the first identity follows from the invariance of the length for reparametrizations. We claim that $\hat \xi$ is a curve in $G^N(\R^d)$ with unitary speed that arrives to $\bar g$ in minimal time. Indeed, if $\hat{\xi} ':[0,\hat{T}']\to G^N(\R^d)$ had unitary speed and arrived at $\bar g$ in time $\hat T'< \hat T$, then we would have $\ell(\hat \xi') = \hat T'<\hat T= \ell(\xi)$, contradicting the fact that $\xi$ is the length-minimizer.\\
    We now show that 3) implies 2).
    Let $\hat \xi:[0,\hat T] \to G^N(\R^d)$ be a curve with unitary speed that arrives at $\bar g$ in minimal time.
    We claim that it is a reparametrization of a length-minimizer curve $\xi:[0,T]\to G^N(\R^d)$ that joins $\xi(0)=(1,0,\ldots,0)$ to $\xi(T)=\bar g$ with length $\ell(\xi)=\ell(\hat \xi) = \hat T$.
    Assume that $\xi$ is not length-minimizer, i.e. there exists $\xi':[0,T]\to G^N(\R^d)$ joining the neutral element to $\bar g$ with $\ell(\xi')<\ell(\xi)$.
    Hence, as detailed above, it is possible to construct a reparametrization $\hat \xi ':[0,\hat T']\to G^N(\R^d)$ with unitary speed, such that $\hat \xi'(\hat T')=\bar g$ and $\hat T'= \ell(\xi')$. However, this is in contradiction with the time-minimality of $\hat \xi$.\\
     The equivalence of 1) and 2) is a consequence of Cauchy Schwarz inequality and is proved in \cite[Lemma~3.64]{agrachev2004control}.
\end{proof}
The previous result shows that minimizing the length is indeed equivalent to minimizing the energy. 
In this case, the energy-minimizer is no more invariant for reparametrizations (as it is the case for the length functional), and in fact the curve with minimal energy is the length-minimizer with constant speed. 
Finally, we can state the optimization problem of interest:
    \begin{equation}\label{ori_prob}
        \begin{cases}
        \min_{a\in L^2([0,1],\R^d)}\ \ J(a), \\
        \dot{\xi}(s) = \sum_{i=1}^d a_i(s) U_i(\xi(s)) ,\ s\in[0,1], \\
        \xi(0) = (1,0,\dots,0)\in G^N(\R^d), \\
        \xi(1) = \bar{g}\in G^N(\R^d).
        \end{cases}
    \end{equation}
An alternative version of this problem relaxes the ``hard'' endpoint constraint $\xi(1) = \bar{g}$ and integrates it in the cost functional, resulting in a more numerically-effective approach.
This version of the problem reads as follows: 
    \begin{equation}\label{gamma_opt_prob}
        \begin{cases}
        \min_{a\in L^2([0,1],\R^d)}\ \ J(a) + \gamma  f(\xi(1),\Bar{g}), \\
        \dot{\xi}(s) = \sum_{i=1}^d a_i(s) U_i(\xi(s)) ,\ s\in[0,1], \\
        \xi(0) = (1,0,\dots,0)\in G^N(\mathbb{R}^d),
        \end{cases}
    \end{equation}
where the second term in the first line $f:G^N(\R^d)\times G^N(\R^d)\to \R$ is a non-negative continuous function such that $f(g,\bar g)>0$ if $g\neq \bar g$ and $f(\bar g, \bar g)=0$.
In a recent article \cite{scagliotti2023gradient} the author shows that in the limit $\gamma \rightarrow \infty$ the minima of the two optimization problems \eqref{gamma_opt_prob} and \eqref{ori_prob} coincide. 
In the next section, we report the $\Gamma$-convergence result of \cite{scagliotti2023gradient}, and some useful consequences that descend from it.
Generally, the main advantage of integrating the end-point cost into the objective is that we do not have to handle the final-time constraint $\xi(1)=\bar g$, which can be difficult to implement in practice.
The magnitude of $\gamma$ becomes a trade-off with finding the shortest path and arriving at the desired signature.

\subsection{Relaxing the endpoint constraint through $\Gamma$-convergence}\label{section_gamma_conv}

In this section, we take advantage of the results derived in \cite[Section~7]{scagliotti2023gradient} regarding the $\Gamma$-convergence of optimal control problems in $\R^d$.
More precisely, in \cite{scagliotti2023gradient} the author studied the problem of relaxing ``hard'' terminal-time constraints by adding to the cost of interest a weighted term, which was in charge of penalizing the deviation of the trajectories from the desired target state.
Since in \cite{scagliotti2023gradient} the focus was the study of gradient flows associated to optimal control problems, it was crucial to put a Hilbert structure on the space of admissible controls.
However, since the $\Gamma$-convergence argument does not really rely on the Hilbert structure, in this section we prefer to deal with a more general case, as we shall detail below. \\
The aim of this part is to provide a rigorous justification for the fact that taking $\gamma\to\infty$ in \eqref{gamma_opt_prob} leads to the same optimization problem as in \eqref{ori_prob}. 
This means the problem with regularization term weighted by $\gamma$ is an approximation on the original problem.  
We start by recalling the definition of $\Gamma$-convergence (for further details, we recommend the monograph \cite{D93}).

\begin{definition}[$\Gamma$-convergence]\label{gamma_conv}
    We say that a sequence of functionals $(\mathcal{F}_\gamma)_{\gamma \in \mathbb{R}}$ $\Gamma$-converges to $\mathcal{F}: L^p([0,1],\R^d) \to \mathbb{R}^+ \cup \{\infty\}$ w.r.t. weak topology of $L^p([0,1],\R^d)$ as $\gamma\rightarrow \infty$ if the following two conditions hold: 
    \begin{itemize}
        \item[1)] ($\liminf$-condition) for every sequence $(a_\gamma)_{\gamma \in \mathbb{R}^+} \subset L^p([0,1],\mathbb{R}^d)$ s.t. $a_\gamma \rightharpoonup_{L^p} a$ as $\gamma \rightarrow \infty$:
        \begin{equation*}
        \liminf_{\gamma \rightarrow \infty} \mathcal{F}_\gamma(a_\gamma) \geq \mathcal{F}(a).
        \end{equation*}
        \item[2)] ($\limsup$-condition) for every $a\in L^p([0,1],\mathbb{R}^d)$ there exists a so called recovery sequence which weakly converges $a_\gamma \rightharpoonup_{L^p} a$ as $\gamma \rightarrow \infty$ such that 
        \begin{equation*}
        \limsup_{\gamma \rightarrow \infty} \mathcal{F}_\gamma(a_\gamma) \leq \mathcal{F}(a).
        \end{equation*}
    \end{itemize}
\end{definition}

In our case, the functional from \eqref{gamma_opt_prob} takes the form: 
\begin{equation*}
    \mathcal{F}_\gamma(a) = J(a) + \gamma f(\xi(1),\bar{g}).
\end{equation*}
As mentioned at the beginning of this section, we revise the proof of the $\Gamma$-convergence step by step for our objective, and we extend the framework described in \cite{scagliotti2023gradient} to controls from $L^p([0,1],\R^d)$ with $p\in(1,\infty)$.
In this way, we can embrace as well more general costs of the form:
\begin{equation} \label{eq:gener_cost}
    \mathcal{F}_\gamma(a) = \| a \|_{L^p}^p + \gamma f(\xi(1),\bar{g}).
\end{equation}
\begin{remark}\label{rmk:gener_cost}
    In view of the main problem that we discuss in the present paper, i.e. the inversion of the truncated signature using a length-minimizer path, the correct exponent to consider in \eqref{eq:gener_cost} is $p=2$.
    Nevertheless, it can be interesting to extend our analysis to the case $p\in(1,\infty)$. In this case, recalling that $a_i=[\dot x]_i$ (see Theorem~\ref{control_sys}), we have that
    \begin{equation*}
    \mathcal{F}_\gamma(a) = 
    \int_0^1 |\dot x(t)|_2^p\, dt+ \gamma f(\xi(1),\bar{g}).
\end{equation*}
\end{remark}

We now present the $\Gamma$-convergence result.
Since the proof is quite technical, we postpone it to the Appendix.

\begin{theorem}\label{gamma_conv_thm}
    Given $\gamma>0$ we consider the control system \eqref{gamma_opt_prob} over the bounded domain $A_c=\{a\in L^p([0,1],\R^d), ||a||_{L^p}\leq c\}, \ c>0$. Then it holds on the one hand that the objective functionals $\mathcal{F}_\gamma:A_c \to \R^+$ admit a minimizer for each single $\gamma >0$. On the other hand, we can show that the sequence $(\mathcal{F}_\gamma)_{\gamma>0}$, $\Gamma$-converges w.r.t. the weak topology of $L^p$ to the functional
    \begin{equation}
    \label{eq:gamma_limit}
        \mathcal{F}(a) =
        \begin{cases}
            \|a\|_{L^p}^p, & \xi(1) = \bar{g},\\
            \infty , & \mbox{otherwise}.
        \end{cases}
    \end{equation}
\end{theorem}
\begin{proof}
    See the Appendix.
\end{proof}

\begin{remark} \label{rmk:bounded}
    It is important to note that the restriction to the set $A_c$ is purely due to technical reasons concerning $\Gamma$-convergence. Indeed, the topology underlying the $\Gamma$-convergence should be metrizable (or at least first countable). We recall that the weak topology of $L^p$ is not first countable (and hence, is not metrizable as well), unless it is restricted to a bounded set of $L^p$ (with $p\in(1,\infty)$).
    The parameter $c>0$ involved in the definition of the set $A_c$ (see also Step~2 in the proof) can be set as $c=\| \tilde a \|_{L^p}$, where $\tilde a \in \U$ is \emph{any} (not necessarily optimal) control such that the corresponding trajectory satisfies $\xi(1)=\bar g$, whose existence is guaranteed by Theorem~\ref{Chow}.
    Finally, we insist on the fact that, \emph{for every $\gamma>0$, every minimizer of $\mathcal{F}_\gamma$ is contained in $A_c$}. As a matter of fact, we have that
    \begin{equation*}
        \inf_{a\in A_c}\mathcal{F}_\gamma(a) = \inf_{a\in L^p([0,1],\mathbb{R}^d)} \mathcal{F}_\gamma(a).
    \end{equation*}
    Therefore, in practice, when addressing numerically the minimization of $\mathcal{F}_\gamma$, it is not necessary to take into account any kind of restriction for the controls.
\end{remark}

Note that this result connects the two optimization problems \eqref{ori_prob} and \eqref{gamma_opt_prob}, and hence it gives meaning to the minimizers of the relaxed optimization problem. More precisely, the minimizers of the relaxed optimization problem can be understood as approximations of the minimizer of the ``hard'' endpoint constraint problem \eqref{ori_prob}. From the $\Gamma$-convergence argument, it follows that the sequence of minimizers of $\mathcal{F}_\gamma$ denoted by $(a_\gamma)_{\gamma\in \mathbb{R}}$ is weakly pre-compact, and that any limiting point of this sequence minimizes $\mathcal{F}$. Nevertheless, in this setting it is also possible to prove strong convergence w.r.t. $L^p$-topology. 
\begin{corollary}
    Let $(a_\gamma)_\gamma$ be a sequence such that $a_\gamma \in \argmin \mathcal{F}_\gamma$ for every $\gamma$. 
    Then, $(a_\gamma)_\gamma$ is pre-compact with respect to the strong topology of $L^p$, and every limiting point as $\gamma\to\infty$ is a minimizer for $\mathcal{F}$.
\end{corollary}
\begin{proof}
From Remark~\ref{rmk:bounded} it follows that $(a_{\gamma})_\gamma$ is included in the bounded set $A_c\subset L^p$. 
Hence, we deduce that $(a_{\gamma})_\gamma$ is pre-compact in the \emph{weak} topology of $L^p$. Moreover, from a celebrated consequence of $\Gamma$-convergence (see \cite[Corollary~7.20]{D93}) it follows that every of its ($L^p$-weak) limiting point is a minimizer for $\mathcal{F}$.
We have to prove that the convergence of subsequences of $(a_\gamma)_\gamma$ towards minimizers of $\mathcal{F}$ holds as well in the \emph{strong} topology of $L^p$.
Let $(\gamma_m)_m$ be an increasing sequence such that $\gamma_m\to\infty$ and $a_{\gamma_m}\weak_{L^p} a$ as $m\to\infty$. We have to show that $a_{\gamma_m}\to_{L^p} a$.
We first observe that, since $a\in \argmin \mathcal{F}$, we have that $\mathcal{F}(a)<\infty$, and hence the corresponding curve of signatures fulfills $\xi(1)=\bar{g}$. Moreover, 
    \begin{align*}
        \mathcal{F}(a) =  \|a\|_{L^p}^p & = \|a\|_{L^p}^p + \gamma_m f(\xi(1),\bar{g})
        = \mathcal{F}_{\gamma_m}(a) \\
        &\geq \mathcal{F}_{\gamma_m}(a_{\gamma_m})  = \|a_{\gamma_m}\|_{L^p}^p + \gamma f(\xi_\gamma(1),\bar{g}) \\ 
        &\geq \| a_{\gamma_m}\|_{L^p}^p.
    \end{align*}
Thus, we deduce that 
    \begin{align}\label{ineq_apend}
        \|a\|_{L^p}^p \geq \limsup_{m\to \infty}  \|a_{\gamma_m}\|_{L^p}^p \geq \liminf_{m\to \infty}  \|a_{\gamma_m}\|_{L^p}^p \geq   \|a\|_{L^p}^p,
    \end{align}
where the last inequality is due to the lower semi-continuity of the $L^p$-norm for weakly convergent sequences. 
    From \eqref{ineq_apend} we get that $\|a_{\gamma_m}\|_{L^p}^p \rightarrow \|a\|_{L^p}^p$. 
    Paired with the weak convergence $a_{\gamma_m}\weak_{L^p} a$ this implies the strong convergence of $a_{\gamma_m}$ in $L^p$ by a classical result in functional analysis (\cite[Proposition~3.32]{brezis2011functional}), since $L^p$-spaces are uniformly convex for $1<p<\infty$. 
\end{proof}
Another immediate Corollary gives us a rate of decay to $0$ of the term responsible for enforcing softly the constraint. 
\begin{corollary}
    Let $a_\gamma$ be a sequence such that $a_\gamma\in \argmin \mathcal{F}_\gamma,\ \forall \gamma\geq 1$. Then, $f(\xi_\gamma(1),\bar{g}) = o(1/\gamma)$ as $\gamma\to\infty$. 
\end{corollary}
\begin{proof}
    In order to show that 
    $\lim_{\gamma\to\infty} \gamma f(\xi_\gamma(1),\bar{g})=0$, 
    we prove that, for every increasing sequence $(\gamma_m)_m$ such that $\gamma_m\to\infty$ as $m\to \infty$, the sequence $\big( \gamma_m f(\xi_{\gamma_m}(1),\bar{g})
    \big)_{m}$ admits a subsequence $\left( \gamma_{m_k}
        f(\xi_{\gamma_{m_k}}(1),\bar{g})
        \right)_{k}$
    such that 
    \begin{equation} \label{eq:lim_small_o}
        \lim_{k\to\infty} \gamma_{m_k}
        f(\xi_{\gamma_{m_k}}(1),\bar{g})=0.
    \end{equation}
    Let us extract from $a_{\gamma_m}$ a sub-sub sequence $a_{\gamma_{m_k}}$ such that $a_{\gamma_{m_k}}\rightharpoonup_{L^p} a,\ k\to \infty.$
    Repeating the same arguments as in the previous corollary, we can deduce that
    \begin{align*} \|a_{\gamma_{m_k}}\|_{L^p}^p \geq \lim_{k\to \infty} \|a_{\gamma_{m_k}}\|_{L^p}^p + \gamma_{m_k}f(\xi_{\gamma_{m_k}}(1),\bar{g}) \geq \lim_{k\to \infty} \|a_{\gamma_{m_k}}\|_{L^p}^p = \|a\|_{L^p}^p
    \end{align*}
    Hence, we obtain \eqref{eq:lim_small_o} and we conclude the proof. 
\end{proof}
\begin{remark}
    We observe that the previous Corollary strengthens an estimate proved in Step~3 of the proof of Theorem~\ref{gamma_conv_thm} (see the Appendix). Indeed, there it is observed that, for $\gamma$ large enough, we have $\gamma f(\xi_\gamma(1), \bar g)\leq C+1$, with $C>0$ constant, yielding $f(\xi_\gamma(1),\bar g) = O(1/\gamma)$.
\end{remark}
We have shown that the minimizers of $\mathcal{F}_\gamma$ are converging in the $L^p$ strong topology to the minimzers of $\mathcal{F}$. However, since $(\mathcal{F}_\gamma)$ are in general non-convex a natural question concerns the limits of local minimizers of $\mathcal{F}_\gamma$.
\begin{proposition}
    Assume that $(a_\gamma)_{\gamma>0}$ is a strongly convergent sequence $a_\gamma\to_{L^p} a$ as $\gamma\to\infty$ such that $a_{\gamma}$ is a local minimizer for $\mathcal{F}_\gamma$, i.e. there exists $r>0$ such that 
    \begin{equation*}
        \mathcal{F}_\gamma(a_\gamma)= \inf_{a'\in B_r(a_\gamma)}\mathcal{F}_\gamma(a') \leq C
    \end{equation*}
    for every $\gamma>0$.
    Then, $\mathcal{F}(a)=\inf_{a'\in B_{r/2}(a')} \mathcal{F}(a)$, i.e. $a$ is a local minimizer of $\mathcal{F}$.
\end{proposition}
\begin{proof}
    From $C\geq \mathcal{F}_\gamma(a_\gamma) = \|a_\gamma\|_{L^p}^p+\gamma f(\xi_\gamma(1),\bar{g}),$ we deduce that 
    \begin{equation*}
        f(\xi_\gamma(1),\bar{g})\leq \frac{1}{\gamma} \left(C-\|a_\gamma\|_{L^p}^p\right)
    \end{equation*}
    and, since $\gamma\to \infty$, we get $f(\xi_\gamma(1),\bar{g})\to 0$, which implies that $f(\xi_\infty(1),\bar{g})= 0$, where $\xi_\infty$ is the curve in $G^N(\R^d)$ corresponding to the limiting control $a$. This in turn implies that $\xi_\infty(1)=\bar g$, i.e., $\mathcal{F}(a)= \|a\|_{L^p}^p$.\\
    Let us choose $\bar{\gamma}\geq 0$ such that $\|a_\gamma-a\|_{L^p} \leq r/2$ for every $\gamma \geq \bar \gamma$.
    Next assume that there exists $a'\in B_{r/2}(a)$  such that $\mathcal{F}(a')\leq \mathcal{F}(a)$. Then, recalling that $B_{r/2}(a)\subset B_r(a_\gamma)$ and that $\mathcal{F}_\gamma(a_\gamma)= \inf_{ B_r(a_\gamma)}\mathcal{F}_\gamma$, we deduce
    \begin{equation*}
        \mathcal{F}(a)=\|a\|_{L^p}^p \geq F(a')=\|a'\|_{L^p}^p\geq \mathcal{F}_\gamma(a_\gamma)\geq \|a_\gamma\|_{L^p}^p.
    \end{equation*}
    Since $a_\gamma\to_{L^p} a$, we have that $\|a_\gamma\|_{L^p}^p \rightarrow \|a\|_{L^p}^p$, which yields
    \begin{align*}
        \|a\|_{L^p}^p \geq \|a'\|_{L^p}^p\geq \lim_{\gamma\to \infty}  \|a_\gamma\|_{L^p}^p = \|a\|_{L^p}^p,
    \end{align*}
    from which we read $\mathcal{F}(a) = \|a\|_{L^p}^p = \|a'\|_{L^p}^p=\mathcal{F}(a')$. 
    Therefore, we have shown that, for all $a'\in B_{r/2}(a)$ such that $\mathcal{F}(a')\leq \mathcal{F}(a)$, we get $\mathcal{F}(a')=\mathcal{F}(a)$. This concludes the proof.
\end{proof}
The arguments above show that we can address problem \eqref{gamma_opt_prob}, that is more suitable for numerical purposes, since it incorporates a \emph{softer} formulation of the terminal-time constraint. 
In the limit $\gamma \rightarrow \infty$, we recover the solutions original hard-constrained problem \eqref{ori_prob}.

\subsection{Pontryagin's Maximum Principle for signatures}

The sections before clarified why it is useful to look at problem \eqref{gamma_opt_prob} instead of \eqref{ori_prob}. Therefore, we state the Pontryagin Maximum Principle (PMP) for our problem, and we introduce the iterative algorithm proposed in \cite{sakawa1980} and based on the PMP. In \cite{sakawa1980} the authors proved that the method provides a sequence of controls along which the cost functional is monotonically decreasing. 
This algorithm was designed to solve optimal control problems of the same type as the ones that were formulated in the previous section. We report that it has been recently applied for linear-control problems concerning the approximation of diffeomorphisms \cite{scagliotti2023deep} and the optimal simultaneous steering of an ensemble of systems \cite{scagliotti2023optimal}.
We recall that the PMP provides a set of necessary conditions for the optimality of the control.
For the purposes of our paper, we can rely on a classical formulation of the PMP, as, for example, the one described in the textbook \cite{bressan}. 
Before stating the PMP we have to prepare the setting for Algorithm \ref{alg:loop} and introduce a reformulation of the objective in \eqref{gamma_opt_prob}.
\begin{equation} \label{eq:ctrl_problem_PMP}
    \begin{split}
    &\min_{a \in L^2([0,1]\mathbb{R}^d)}\ \ J(a) + \gamma  f(\xi(1),\Bar{g})  \\
    &\Leftrightarrow \min_{a \in L^2([0,1]\mathbb{R}^d)}\ \ \frac{1}{2} \int_0^1 |a(t)|^2 \, dt + \gamma  f(\xi(1),\Bar{g})  \\
    &\Leftrightarrow \min_{a \in L^2([0,1]\mathbb{R}^d)}\ \ \int_0^1 \left( \frac{1}{2} |a(t)|^2 + \gamma  \frac{\partial}{\partial t} f(\xi(t),\Bar{g}) \right)  \, dt
    \end{split}
\end{equation}
For this objective, we state the Pontryagin Maximum Principle below.

\begin{theorem}[Pontryagin's Maximum Principle (PMP)]\label{PMP}
Given the optimal control problem \eqref{gamma_opt_prob}, we define the Hamiltonian $H:G^N(\R^d)\times \R^d \times T^N(\R^d) \to \R$ as the following function
    \begin{equation}\label{Hamiltonian}
        H(\xi,\omega,p) =  \frac{1}{2} |\omega|^2  + 
        \langle \gamma  
        \nabla_{\xi} f(\xi,\Bar{g}) + p, \sum_{i=1}^d \omega_i U_i(\xi) \rangle,
    \end{equation}
where $p\in T^N(\mathbb{R}^d)=\bigoplus_{k=1}^N (\mathbb{R}^d)^{\otimes k }$ is called the \emph{costate variable}. Assume that the trajectory $s\mapsto \xi^*(s) \in C([0,1],G^N(\mathbb{R}^d))$ corresponds to the curve of optimal controls $s\mapsto [\dot{x}^*]_i(s) := a_i^*(s) \in L^2([0,1],\mathbb{R}^d), i\in \{0,\dots,d\}$, and let us consider the solution of the following linear ODE (\emph{adjoint equation}):
\begin{equation} \label{eq:costate_backward}
    \begin{cases}
    \dot p(s) = -\frac{\partial}{\partial \xi} H(\xi^*(s),a^*(s),p(s)) & \mbox{a.e. in }[0,1],\\
    p(1) = 0\in T^N(\mathbb{R}^d).
    \end{cases}
\end{equation}
Then, the necessary optimality condition reads as
    \begin{equation} \label{eq:max_condition}
        H(\xi^*(s),a^*(s),p(s)) = \min_{\omega \in \R^d} H(\xi^*(s),\omega,p(s))
    \end{equation}
for a.e. $s\in[0,1].$
\end{theorem}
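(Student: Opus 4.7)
The plan is to reduce the theorem to a direct application of a classical Pontryagin Maximum Principle, such as the one in \cite{bressan}, by exploiting the Bolza-to-Lagrange reformulation already carried out in \eqref{eq:ctrl_problem_PMP}. Since $\xi(0)$ is fixed, the identity
\begin{equation*}
    \gamma f(\xi(1),\bar g) \;=\; \gamma f(\xi(0),\bar g) \;+\; \int_0^1 \gamma \tfrac{\partial}{\partial t} f(\xi(t),\bar g)\,dt
\end{equation*}
turns the Mayer-type penalty into an equivalent integral cost (up to an additive constant irrelevant for minimization), producing a pure Lagrange problem whose effective running cost, upon using the chain rule $\frac{d}{dt}f(\xi(t),\bar g) = \langle \nabla_\xi f(\xi,\bar g), \dot\xi\rangle$ along admissible trajectories, reads
\begin{equation*}
    L(\xi,a) \;=\; \tfrac{1}{2}|a|^2 \;+\; \gamma\, \bigl\langle \nabla_\xi f(\xi,\bar g),\, \sum_{i=1}^d a_i\, U_i(\xi) \bigr\rangle.
\end{equation*}

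Next, I would write the control-theoretic Hamiltonian associated with this Lagrange formulation in the standard way, namely $H(\xi,\omega,p) = L(\xi,\omega) + \langle p, \sum_{i=1}^d \omega_i U_i(\xi)\rangle$, which collapses to exactly \eqref{Hamiltonian}. Applying the PMP of \cite{bressan} to the reduced pure Lagrange problem then yields both the adjoint equation in \eqref{eq:costate_backward} and the pointwise minimization \eqref{eq:max_condition}. Crucially, the transversality condition $p(1)=0$ emerges precisely because the reformulated problem no longer carries a terminal cost, which is the cleanest way to obtain the stated boundary value for the costate. The regularity hypotheses required for the classical PMP are easy to verify: $L$ is $C^1$ in $\xi$ (assuming $f(\cdot,\bar g)\in C^1$), strictly convex and coercive in $a$; the fields $U_i(\xi)=\xi \otimes e_i$ are polynomial with the sublinear growth \eqref{sublingr}; and along any minimizer the $L^2$-norm of $a$ is bounded, so $\xi^*(\cdot)$ stays in a compact subset of $G^N(\R^d)$ on which all derivatives of the Hamiltonian are bounded.

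The main subtlety, and the step where I expect most of the care to be required, is that the state $\xi$ lives on the submanifold $G^N(\R^d)\subset T^N(\R^d)$, so the gradient $\nabla_\xi f$ and the costate $p(s)$ need a consistent interpretation. I would circumvent this by extending $f(\cdot,\bar g)$ smoothly to a neighborhood of $G^N(\R^d)$ in the ambient tensor space and then formulating the PMP directly in the flat vector space $T^N(\R^d)$, where the classical statement applies verbatim. Since the controlled fields $U_i(\xi)=\xi\otimes e_i$ are tangent to $G^N(\R^d)$ along $G^N(\R^d)$ by Theorem~\ref{control_sys}, the trajectory obtained by solving the ambient ODE automatically stays on the Lie group, so the extension of $f$ outside $G^N(\R^d)$ plays no role at the optimizer; the costate $p(s)$ is then an element of $T^N(\R^d)^*\cong T^N(\R^d)$ with its standard inner product, and matching the resulting formulas with \eqref{Hamiltonian}--\eqref{eq:costate_backward} concludes the proof.
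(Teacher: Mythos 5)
Your proposal is correct and follows essentially the same route as the paper: the paper performs exactly this Bolza-to-Lagrange reformulation in \eqref{eq:ctrl_problem_PMP} immediately before the theorem and then proves the statement by direct appeal to the classical PMP in \cite[Theorem~6.1.1]{bressan}, with the transversality condition $p(1)=0$ coming from the absence of a terminal cost in the reformulated problem. Your additional remarks on verifying the regularity hypotheses and on working in the ambient space $T^N(\R^d)$ are consistent with, and merely flesh out, the citation-based argument given there.
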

\begin{proof}
    See \cite[Theorem~6.1.1]{bressan}.
\end{proof}

\begin{remark}\label{rmk:abnormal}
    In Theorem~\ref{PMP}, we formulated the PMP for normal extremals only. Indeed, the optimal control problem as formulated in \eqref{eq:ctrl_problem_PMP} does not admit abnormal extremals nor singular arcs.
\end{remark}


We observe that the PMP stated in Theorem~\ref{PMP} requires the solution of two separate differential equations.
Namely, we have the \emph{forward} controlled dynamics $\dot \xi(s) = \sum_{i=1}^d a_i(s)U_i(\xi(s))$ (with the Cauchy datum prescribed at the initial instant), and the \emph{backward} linear ODE \eqref{eq:costate_backward} for the costate $p$, where the Cauchy condition is assigned at the terminal instant.
Finally, the relation \eqref{eq:max_condition} relates the optimal trajectory $\xi^*$, the adjoint trajectory and the optimal control $a^*$.
The algorithm analyzed in \cite{sakawa1980} exploits these relations, and we describe below the implementation of this procedure to our specific problem.
The mathematical details for each single step are described in the next section, where we discuss how we solve each of the differential equations numerically.\\

\begin{algorithm}[H]
\SetKwInOut{Input}{Input}\SetKwInOut{Output}{Output}
\Input{$a^0 \in L^2([0,1],\mathbb{R}^d)$ (initial guess for the control)\\
$\xi^0\in \mathbb{X}$ (initial trajectory of signatures) \\
$D\in \mathbb{N}$ (number of discretization points)\\
$M\in\mathbb{N}$ (number of iterations of the algorithm)\\
$C\in \mathbb{R}^+$ (used in the augmented Hamiltonian)}
\Output{$a^* \in L^2([0,1],\mathbb{R}^d)$ (approximation of the optimal control)\\
$\xi^*\in \mathbb{X}$ (optimal trajectory of signatures) }
\For{j in $\{1,\dots,M\}$}{
    1) Solve backward for the equidistant grid $s\in \{1,\frac{D-1}{D},\dots,\frac{1}{D},0\}$ \begin{equation*}\dot p^{j-1}(s) = -\frac{\partial}{\partial g} H(\xi^{j-1}(s),a^{j-1}(s),p^{j-1}(s)),\ \ p^{j-1}(1) = 0\in T^N(\mathbb{R}^d)\end{equation*}

    2) Find $a^j(s)$ and $\xi^j(s)$ for $s\in \{0,\frac{1}{D},\dots,1\}$ by alternating the following two steps:
    
    2.1) Augment the Hamiltonian by a quadratic term:
                    \begin{equation*} 
                    \begin{split}
                    Q(\xi^{j}(s),a^{j}(s),p^{j-1}(s),C,a^{j-1}(s))=\min_{\omega\in \mathbb{R}^d}
                    &\big[
                    H(\xi^{j}(s),\omega,p^{j-1}(s)) \\
                    & \, + (\omega-a^{j-1}(s))'C(\omega-a^{j-1}(s))
                    \big]
                    \end{split}
                    \end{equation*}
                Solve $\nabla_\omega Q(\xi^{j}(s),\omega,p^{j-1}(s),C,a^{j-1}(s)) = 0$ for $\omega$ $\Rightarrow$ set $a^j(s) = \omega$ 
                
    2.2) Use Chen's Relation in Theorem \ref{Chen} to solve  
                \begin{equation*}\dot{\xi}^j(s) = \sum_{i=1}^d a_i^j(s) U_i(\xi^j(s)),\ \ \xi(0)=(1,0,\dots,0)\in G^N(\mathbb{R}^d)\end{equation*}

    3.)  Check, whether our algorithm improves: \\
    \If{$J(a^{j-1}) + \gamma  f(\xi^{j-1}(1),\Bar{g})>J(a^{j}) + \gamma  f(\xi^{j}(1),\Bar{g})$}{continue}
    \Else{$a^{j-1} \rightarrow a^{j}$ and $C = 2C$}
    $\xi^* = \xi^j, \ a^* = a^j$}
\KwRet{$\xi^*,a^*$}
\caption{Iterative PMP scheme}
\label{alg:loop}
\end{algorithm}

\subsection{Finding the Shortest Path}\label{find_path}

In this section, we explain the mathematical details of each step in the algorithm introduced in the previous section. 
In order to softly enforce that $\xi(1)$ is close to $\bar g$, we take as terminal cost
    \begin{equation}\label{regularizer_lip}
        f(\xi(1),\bar{g}) = \sqrt{1+|\xi(1)-\bar{g}|^2}-1 = \sqrt{ 1+ \sum_{k=0}^N \sum_{|l|=k} |\xi_l(1)-\Bar{g}_l|^2}-1,
    \end{equation}
where the multi-index set is given by $\{|l|=k\}=\{i_1\dots i_k|i_1,\dots,i_k \in \{0,1,\dots,d\}\}$. 
This cost compares the square of each entry of $\bar{g}$, with the signature $\xi(1)$, i.e. where the trajectory of the states actually arrives. 
In the Hamiltonain \eqref{Hamiltonian} this end point constraint appeared with the derivative w.r.t. $s\in[0,1]$.
For this reason, we compute:
\begin{equation} \label{time_derivative}
    \begin{split}
    \frac{\partial}{\partial s} f(\xi(s),\bar{g}) &= \frac{\partial}{\partial s} \sqrt{||\xi(s)-\bar{g}||^2+1}-1 \\
    &= \frac{\sum_{k=0}^N \sum_{|l|=k} (\xi_l(s)-\Bar{g}_l) (\dot{\xi}(s))_l }{\sqrt{||\xi(s)-\Bar{g}||^2+1}}   \\
    &=  \frac{\sum_{k=0}^N \sum_{|l|=k}  (\xi_l(s)-\Bar{g}_l) a_{i_k}(s) \xi_{l \setminus i_k}(s)}{\sqrt{||\xi(s)-\Bar{g}||^2+1}},  
    \end{split}
\end{equation}
where we set $(l \setminus i_k) := i_1 \dots i_{k-1}$, and we used $\dot \xi(s) = \sum_{i=1}^d a_i(s)\xi(s) \otimes e_i$. 
This derivation is employed by the algorithm for solving the differential equation \eqref{eq:costate_backward} for the costates $s\mapsto p(s)$. 
Since only the value at the terminal time is given, we have to solve the differential equation backward, and we implement the backward implicit Euler scheme at each step $s\in \{1,\frac{D-1}{D},\dots,\frac{1}{D},0\}$. 
In addition, we have to distinguish three cases, since the derivative of the Hamiltonian w.r.t. the $\xi$ variable has three different expressions according to the different levels of the signature. 
We consider $\frac{\partial}{\partial \xi_{i_1,\dots,i_k}} H(\xi,a,p)$ for the following three different cases:
\begin{equation*}
\begin{cases}
    \text{a)} \ \ k\in\{2,\dots,N-1\}\\\text{b)}\ \ k=1 \\\text{c)}\ \ k=N
\end{cases}
\end{equation*}
First of all we compute the following derivatives for each of these three cases:
\begin{itemize}
    \item[a)] For $k\in \{2,\dots,N-1\}$ we use the multiindex $j$ such that $|j|=k$ and $j_i$ is the $i$-th component of $j$:
    \begin{align*}
        & \frac{\partial}{\partial \xi_{j}} H(\xi,a,p) \\
        &\stackrel{\eqref{Hamiltonian}-\eqref{time_derivative}}{=} \frac{\partial}{\partial \xi_{j}} \bigg(\gamma \frac{\sum_{k=0}^N \sum_{|l|=k}  (\xi_l-\Bar{g}_l) a_{j_k} \xi_{l \setminus j_k}}{\sqrt{|\xi(s)-\Bar{g}|^2+1}} +  \langle p, \sum_{i=1}^d a_i \xi \otimes e_i \rangle \bigg)\\
        &=  \gamma \bigg[\bigg(a_{j_k} \xi_{j\setminus j_k} + \sum_{i_{k+1}=1}^d (\xi_{ji_{k+1}}-  \Bar{g}_{ji_{k+1}}) a_{i_{k+1}} \bigg) \frac{1}{\sqrt{|\xi-\bar{g}|^2+1}} \\ 
        &\qquad + \sum_{k=0}^N \sum_{|l|=k}  (\xi_l-\Bar{g}_l) a_{l_k} \xi_{l \setminus l_k} \frac{-(\xi_j-\Bar{g}_j)}{\big(|\xi-\bar{g}|^2+1\big)^{\frac{3}{2}}} \bigg] \\
        &\qquad+ \sum_{i_{k+1}=1}^d   p_{ji_{k+1}} a_{i_{k+1}}.
    \end{align*}

    \item[b)] For $k=1$ we have only one index $j\in\{1,\dots,d\}$. The derivation stays exactly the same until line 3 in part a), from line 4 we use that $g_0 = 1$: 
     \begin{align*}
         \frac{\partial}{\partial \xi_{j}} H(\xi,a,p) &=  \gamma \bigg[\bigg(a_{j} + \sum_{i_2=1}^d (\xi_{ji_{2}}-  \Bar{g}_{ji_{2}}) a_{i_{2}} \bigg)\frac{1}{\sqrt{|\xi-\bar{g}|^2+1}} \\ 
        &\qquad+ \sum_{k=0}^N \sum_{|l|=k}  (\xi_l-\Bar{g}_l) a_{l_k} \xi_{l \setminus l_k} \frac{-(\xi_j-\Bar{g}_j)}{\big(|\xi-\bar{g}|^2+1\big)^{\frac{3}{2}}} \bigg]
        \\&\qquad 
        + \sum_{i_2=1}^d   p_{ji_{k+1}} a_{i_2}.
     \end{align*}
     
    \item[c)] For $k=N$ we call the corresponding multi-index $j$ such that $|j|=N$ and we start again from the first line of part b). In this case, we have to take into account that in $G^N(\mathbb{R}^d)$ the signatures do not have higher orders than $N$, neither have the costates $p_j \in T^N(\mathbb{R}^d)$. 
    Hence, the last line from part a) reads:
    \begin{align*}
        \frac{\partial}{\partial \xi_{j}} H(\xi,a,p) &= \gamma \bigg(a_{j_N} \xi_{j\setminus j_N} \frac{1}{|\xi-\bar{g}|^2+1} \\ 
        &\qquad + \sum_{k=0}^N \sum_{|l|=k}  (\xi_l-\Bar{g}_l) a_{i_k} \xi_{l \setminus i_k} \frac{-(\xi_j-\Bar{g}_j)}{\big(|\xi-\bar{g}|^2+1\big)^{\frac{3}{2}}} \bigg).
    \end{align*}
\end{itemize}

\subsubsection*{Approximation of the costate equation}
Given these derivatives of the Hamiltonian $H$, we can proceed with the numerical resolution of the backward differential equation \eqref{eq:costate_backward}, starting with the final value $p(1) = 0\in G^N(\mathbb{R}^d)$. 
We employ the implicit Euler approach to approximate the ODE on the discrete grid $\{0, \frac{1}{D},\dots, \frac{D-1}{D}, 1\}$, and, when  $t \in \{0, \frac{1}{D},\dots, \frac{D-1}{D}, 1\}$, we consider the following approximation: 
\begin{equation*}
    \mbox{for } s\in [t,t+1/D ), \quad \dot{p}(s) \simeq \frac{p(t+1/D)-p(t)}{\Delta t} \simeq -\frac{\partial}{\partial \xi} H(\xi(t),a(t),p(t)),
\end{equation*}
which yields
\begin{equation} \label{eq:update_p}
    p(t) = p(t+ 1/D ) + \frac1D \frac{\partial}{\partial \xi} H(\xi(t),a(t),p(t)).
\end{equation}
Recalling that the linear ODE that defines $s\mapsto p(s)$ has a \emph{backward evolution}, the relation \eqref{eq:update_p} provides an implicit Euler scheme for the approximate computation of $p$ in the nodes.
We observe that, when computing $p$ at the node $t$, we already have access to the values $\xi(t)$ and $a(t)$, but the unknown $p(t)$ appears on both sides of the equation. 
In the implementation, we solve this implicit definition by means of fixed-point iterations.
The number of fixed point iterations is a hyperparameter, however, this procedure can be stopped whenever the values of two succeeding iterations are sufficiently close. 
\noindent
This gives us a way of finding a trajectory of the costates which were given by the backward differential equation in Step~1) of Algorithm~\ref{alg:loop}.

\subsubsection*{Update of  control and trajectory}

We now analyze the second step of Algorithm~\ref{alg:loop}, where the current guess for the controls $s\mapsto a^*(s) \in L^2([0,1],\mathbb{R}^d)$ is updated, as well as the corresponding trajectory in the space of signatures $s \mapsto \xi^*(s) \in \mathbb{X}$. The updates of $a$ and $\xi$ take place simultaneously, relying on the costate trajectory approximated in the previous step. \\ 
We insist on the fact that the interesting aspect is that, at each iteration, $a$ and $\xi$ should be derived simultaneously, and that there is an interdependence between them.
Hence, we exploit the fact that we have an initial value for the $\xi$ trajectory, which is $\xi^j(0)=(1,0,\dots,0)\in G^N(\mathbb{R}^d)$ for every iteration $j=1,\ldots, M$ of the algorithm. 
Secondly, we observe that the values of the control $a^j$ at the time $t\in \{0,\frac{1}{D},\dots,1\}$ is responsible for the change in the trajectory  $\xi^j$ from $t$ to $t+1/D$. 
Finally, the value $a^j(t)$ is related to $\xi^j(t)$ through the maximization of the augmented Hamiltonian function:
\begin{equation} \label{eq:augm_ham}
    a^j(t) = \argmin_{\omega \in \mathbb{R}^d} \Big[
    H(\xi^{j}(t),\omega,p^{j-1}(t)) + (\omega-a^{j-1}(t))'C(\omega-a^{j-1}(t)) \Big],
\end{equation}
where $p^{j-1}$ is the costate trajectory approximated at the previous step of Algorithm~\ref{alg:loop}, and the second term is a memory term that is somehow reminiscent of the \emph{minimizing movement scheme} (see, e.g., \cite{santambrogio2017euclidean}) and it penalizes too large deviations of $a^j(t)$ from $a^{j-1}(t)$. In other words, the constant $C>0$ plays here the role of the \emph{inverse of the learning rate} for the update of the controls.
To sum up, for every $t\in \{0,1/D,\ldots, 1\}$, to compute $a^j(t)$ we need $p^{j-1}(t)$, $\xi^j(t)$ and $a^{j-1}(t)$. Once $a^j(t)$ has been obtained, with $\xi^j(t)$ and $a^j(t)$ at hand, we obtain $\xi^{j}(t+1/D)$.
We try to visualize this procedure in Figure \ref{fig:Sakawa}.\\
We finally describe the computation of $\xi^j(t+1/D)$. For this evolution we can exploit Chen's relation introduced in Theorem~\ref{Chen}. More precisely, we consider the affine path $\tilde{x}:[0,1/D)\rightarrow \mathbb{R}^d$ such that $\tilde x(0) = 0$ and $\dot{\tilde x}(s) = a^j(t)$ for every $s\in [0,1/D]$.
Next, we concatenate the path  $\Tilde{x}:[0,1/D) \to \mathbb{R}^d$ with $x:[0,t]\to \mathbb{R}^d$ that was inductively generated up to time $t$ in the same manner:
\begin{equation*}
    x\sqcup \Tilde{x} (s) = \begin{cases}
        x(s) & s\in[0,t],\\
        \Tilde{x}(s)- \tilde x(0)+ {x}(t) & s\in[t,t+1/D].
    \end{cases}
\end{equation*}
As mentioned, due to Chen's relation we have $S_N(x\sqcup \Tilde{x}) = S_N(x)\otimes S_N(\Tilde{x})$, so that we can either derive the signature of the concatenated path directly or, equivalently but more efficiently, we can merge the paths on the level of signatures via a tensor product. This procedure leads to a signature which is the next point on the signature trajectory.
More precisely, we carry out the step 2.2) in the algorithm as 
\begin{equation*}
    \xi^j (t+1/D) = S_N(x\sqcup \tilde x) = \xi^j(t) \otimes S_N(\tilde x).
\end{equation*}
At this point, we are in position to generate $a^j(t+1/D)$.
This alternating procedure as visualized in Figure~\ref{fig:Sakawa} leads to the simultaneous construction of the control and the state trajectory.

\begin{figure}[h!]
	\centering
	\begin{tikzpicture}[shorten >=1pt]
		\tikzstyle{unit}=[draw,shape=circle,minimum size=1.15cm]
		\tikzstyle{hidden}=[draw,shape=circle,minimum size=1.15cm]
 
		\node[unit](x_1) at (0,4){$p^{j-1}(0)$};
		\node[unit](x_2) at (0,2){$a^{j-1}(0)$};
		\node[unit](x_3) at (0,0){$\xi^j(0)$};
 
		\node[hidden](a_1) at (2,2){$a^j(0)$};

		\node[unit](x_33) at (3.5,0){$\xi^j(t_1)$};

		\node(d3) at (11,0){$\ldots$};
		\node(d2) at (11,2){$\ldots$};
		\node(d1) at (11,4){$\ldots$};

		\node[unit](hl0) at (5.1,4){$p^{j-1}(t_1)$};
		\node[unit](hl1) at (5.1,2){$a^{j-1}(t_1)$};
 
		\node[hidden](a_2) at (8,2){$a^j(t_1)$};

		\node[unit](hl33) at (9.5,0){$\xi^j(t_2)$};

		\draw[->] (x_1) -- (a_1);
		\draw[->] (x_2) -- (a_1);
		\draw[->] (x_3) -- (a_1);
 
		\draw[->] (x_3) -- (x_33);
            \draw[->] (a_1) -- (x_33);

            \draw[->] (hl0) -- (a_2);
		\draw[->] (hl1) -- (a_2);
 
		\draw[->] (a_2) -- (hl33);

            \draw[->] (x_33) -- (hl33);
            \draw[->] (x_33) -- (a_2);

		\draw [decorate,decoration={brace,amplitude=8pt},xshift=-4pt,yshift=0pt] (-0.5,4.8) -- (0.75,4.8) node [black,midway,yshift=+0.6cm]{$t=0$};
            \draw node [black,midway,yshift=+5.8cm]{Given};
            \draw [decorate,decoration={brace,amplitude=8pt},xshift=-4pt,yshift=0pt] (4.6,4.8) -- (5.85,4.8) node [black,midway,yshift=+0.6cm]{$t=t_1$};
            \draw node [black,midway,xshift=5.05cm,yshift=+5.8cm]{Given};
		\draw [decorate,decoration={brace,amplitude=8pt},xshift=-4pt,yshift=0pt] (1.5,3) -- (2.75,3) node [black,midway,yshift=+0.6cm]{Step 2.1};
		\draw [decorate,decoration={brace,amplitude=8pt},xshift=-4pt,yshift=0pt] (3,1) -- (4.25,1) node [black,midway,yshift=+0.6cm]{Step 2.2};
          \draw [decorate,decoration={brace,amplitude=8pt},xshift=-4pt,yshift=0pt] (7.6,3) -- (8.85,3) node [black,midway,yshift=+0.6cm]{Step 2.1};
        \draw [decorate,decoration={brace,amplitude=8pt},xshift=-4pt,yshift=0pt] (9,1) -- (10.25,1) node [black,midway,yshift=+0.6cm]{Step 2.2};
	\end{tikzpicture}
	\caption{First two steps of the Algorithm \ref{alg:loop}}
	\label{fig:Sakawa}
\end{figure}
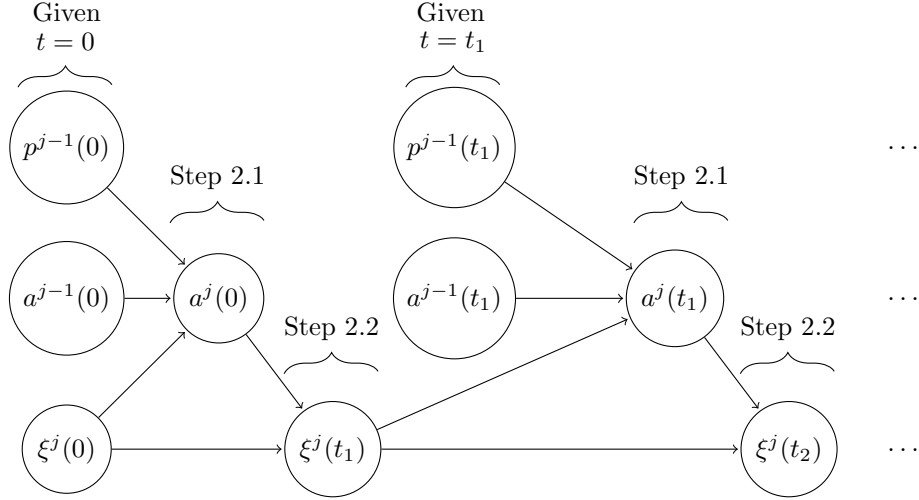
%
%
%
%
%
 
\subsubsection*{Validation of the update}

Finally, for step 3) in Algirhtm~\ref{alg:loop}, we compute the cost functional on the updated control/trajectory couple $(a^j,\xi^j)$, and we compare it with the one corresponding to the previous iteration $(a^{j-1},\xi^{j-1})$.
In the case of a decrease, we accept the update, otherwise we reject it and we increase the value of the constant $C>0$ involved in the definition of the augmented Hamiltonian \eqref{eq:augm_ham}.

\vspace{11pt}

In conclusion, we have adapted the algorithm from \cite{sakawa1980} in order to find the shortest path $s\mapsto \xi(s)\in \mathbb{X}$ connecting the neutral element $(1,0,\dots,0)\in G^N(\mathbb{R}^d)$ to a given signature $\bar{g}\in G^N(\mathbb{R}^d)$. 
On the one hand, the $\Gamma$-convergence result provided a theoretical foundation of the problem we consider. On the other hand, we could exploit the relation between the controls and the signatures as well as the algebraic properties of the signatures to set up the numerical scheme we described in this section. 
The next section deals with a version of the optimal control problem which leaves the final time variable such that we have to find the optimal terminal time as well.

\subsection{Version with Final Variable Time}\label{sec:fin_var_time}

From here we consider a reformulation of the problem, which is equivalent but turns into a final variable-time problem. 
Despite being equivalent, this formulation of the problem is more suitable for the numerical simulations, since we do not have to deal with the parameter $\gamma$. Indeed, in order to be in the regime of the $\Gamma$-convergence result, $\gamma$ should be taken sufficiently large.
Before we describe this method, we recall that the energy functional is not invariant under reparameterizations. For this reason, we have to consider the equivalent problem of minimizing the length under the additional constraint that the speed is constant as described in Theorem \ref{equivalence_of_probs}.

Moreover, choosing the right final time is the challenge one has to solve in this case. First of all, we use the following reparameterization of time in the objective
    \begin{equation*}dz = |a(s)|ds\end{equation*}
such that $z(1) = \int_0^1|a(s)|ds$ and $z(0) = 0$ and 
    \begin{equation}\label{reparam_obj}
        \ell (a) = \frac{1}{2} \int_0^1 |a(s)| ds = \frac{1}{2} \int_0^{z(1)} dz.
    \end{equation}
This means that the objective reduces to the following time-optimal problem:
    \begin{equation}\label{time_prob}
        \begin{cases}
        \bar T:= \min \ T, \\
        \dot{\xi}(s) = \sum_{i=1}^d a_i(s) U_i(\xi(s)) ,\
        |a(s)|_2 = 1
        , \ s\in[0,T], \\
        \xi(0) = (1,0,\dots,0)\in G^N(\R^d), \\
        \xi(T) = \bar{g}\in G^N(\R^d),
        \end{cases}
    \end{equation}
Since we do not know the final correct time a priori, we suggest the following simple strategy to approximate it.
To this end, we notice that for a given time $T^*$ there are three different scenarios that can happen and can be checked in the numerics. In what follows, the function $f:G^N(\R^d)\times G^N(\R^d) \to \R$ is the one that has been defined in \eqref{regularizer_lip}.
\begin{itemize}
    \item[S1)] Final time too short ($T^*< \bar T$). In this case, we have that
    \begin{equation*}
    f(\xi(T^*),\Bar{g}) > 0        
    \end{equation*}
    for every curve $\xi:[0,T^*]\to G^N(\R^d)$ satisfying $\dot{\xi}(s) = \sum_{i=1}^d a_i(s) U_i(\xi(s))$, with $|a(s)|_2 = 1 $ for $s\in[0,T]$, and $\xi(0)=(1,0,\ldots,0)$.
    
    \item[S2)] Final time correct ($T^* = \bar T$). In this case, the minimum is exactly attained and we can recover the optimal controls, i.e., there exists $a^*:[0,\bar T]\to \R^d$ and $\xi:[0,\bar T]\to G^N(\R^d)$ such that
    \begin{equation*}
        f(\xi(\bar T),\Bar{g}) = 0,        
    \end{equation*}
    and $\dot{\xi}^*(s) = \sum_{i=1}^d a_i^*(s) U_i(\xi(s))$, with $|a^*(s)|_2 = 1 $ for $s\in[0,\bar T]$, and $\xi^*(0)=(1,0,\ldots,0)$.
    
    \item[S3)] Final time too large ($T^*> \bar T$). In this case, the target terminal point can be achieved as in S2), however, the curves that do that are no more necessarily geodesics. 
\end{itemize}  
Therefore, we first guess the final time $T^*$ and switch to the same setting of finding the optimal controls directly as in the section before. 
Analogously, we try to simplify the optimization problem and we introduce a problem with terminal-time penalization, that softens the constraint $\xi(\bar{T})=\bar{g}$, as we did in \eqref{gamma_opt_prob}. 
However, this version does not deal with a trade-off between two different parts of the objective. Thus, the parameter $\gamma$ disappears, and we just have to minimize the terminal-cost, while using our guess $T^*$:
    \begin{equation*}\min_{a \in L^2([0,T^*],\mathbb{R}^d)} f(\xi({T^*}),\Bar{g}).\end{equation*}
Next, we shift the problem into a running cost problem in order to set up the prerequisites for Algorithm \ref{alg:loop}. Accordingly, the control problem takes the following form:
    \begin{equation*}
        \begin{cases}
            \min_{a \in L^2([0,T^*],\mathbb{R}^d)}\  \int_0^{T^*} \frac{\partial}{\partial s} f(\xi(s),\Bar{g}) ds\\
\dot{\xi}(s) = \sum_{i=1}^d a_i(s) U_i(\xi(s)) ,\
        |a(s)|_2 = 1
        , \ s\in[0,T^*], \\
        \xi(0) = (1,0,\dots,0)\in G^N(\R^d),
        \end{cases}
    \end{equation*} 
For the optimization routine we follow essentially the same steps as in the previous sections, whereas the simplified objective leads to slightly different derivatives in Step 1) and 2) of Algorithm \ref{alg:loop}. Below we list the commonalities and differences:  
\begin{itemize}
    \item Again we chose the same Lipschitz continuous regularizer from \eqref{regularizer_lip}.
    \item In step 1) of the algorithm the derivation of the $p$ trajectory via the backward equation only depends on the derivative of the Hamiltonian w.r.t. to the $g$ component. Hence, the first part of the objective in \eqref{gamma_opt_prob} does not play a role on the procedure and can be derived in the exact same manner for the final variable time section. The only exception is that we leave out the $\gamma$ parameter as described above. 
    \item In step 2) of the algorithm when we derive the trajectory of the controls via the augmented Hamiltonian in the same way, but the Hamiltonian does not contain the first term that appears at the right-hand side of \eqref{Hamiltonian}. Namely, we have that
    \begin{equation*}
        H(\xi,\omega,p) = 
        \langle  
        \nabla_{\xi} f(\xi,\Bar{g}) + p, \sum_{i=1}^d \omega_i U_i(\xi) \rangle
    \end{equation*}
    \item In Step 3) of the algorithm we take the objective functional described in this section and compare the evaluations on the trajectories of the actual and the previous iteration. Except of the objective this procedure stayed the same as in the original version before.  
\end{itemize}
Having derived the optimal controls by this method we can decide whether we increase or decrease our initial guess for the final time ${T^*}$ by the following decision rule. Determine an error tolerance $\epsilon>0$:
\begin{itemize}
    \item[A1)] If the error $f(\xi({T^*}),\Bar{g})>\epsilon$, we are in scenario S1). Thus, we have to increase the initial guess ${T^*}$.
    \item[A2)] If the error $f(\xi({T^*}),\Bar{g})\leq \epsilon$ we are either in Scenario S2) or S3), thus choose a smaller time ${T^*}$ :  
    \begin{equation*}\begin{cases}
        \text{if the length decreases we were in S3) and thus, we can choose a smaller $T^*$}\\
        \text{if the error increases we were (approximately) in S2) and already optimal}
    \end{cases}\end{equation*}
\end{itemize}
In order to find the optimal time such that $f(\xi(T^*),\Bar{g})\leq \epsilon$ and at the same time the optimal controls correspond to the shortest path, we suggest to start with a low initial time ${T^*}$. 
Then we iteratively increase the initial guess according to $A1)$ and follow the adapted Algorithm~\ref{alg:loop} to find the optimal controls for each iteration, until we end up in the case of $A2)$. 
From here we iteratively decrease the time $T^*$ at a much smaller scale than in the case $A1)$, until we reach $f(\xi({T^*}),\Bar{g})\leq \epsilon$ and the length increases for every further reduction for the level of $T^*$. 

In conclusion, in this section we have introduced an equivalent version of the optimal control problem which has final variable time. 
More precisely, given the correct final time we can find an exact solution to our optimal control problem. Nevertheless, the drawback of this method is that the correct final time is actually an unknown of the problem.

\subsection{Version with Additional Vector Fields}\label{sec:add_vec}

In this section, we present an extension to the first algorithm without the final variable time. Here, we integrate additional vector fields related to the brackets introduced in Definition~\ref{brackets}. 
As we recalled in Definition~\ref{lie_algebra}, the Lie algebra is generally spanned by the Lie brackets. Further, we noted in Remark~\ref{sub_riemann} that the tangent space at a single element in $G^N(\R^d)$ coincides with the Lie algebra generated by the vector fields $\xi\mapsto U_i(\xi) = \xi \otimes e_i$, $ i=1,\ldots,d$. 
Thus, the curves of signatures solving the control system from Theorem \ref{control_sys} can reach every possible signature in $G^N(\R^d)$, provided that we find appropriate weights in front of the vector fields. 
Nevertheless, a natural question is whether the directions of additional vector fields increase the flexibility of the control system and thus, lead to shorter curves. Thus, we can consider the extended control system including vector fields related to the single Lie brackets in the Lie algebra. For simplicity of notation as well as interpretability of the results, we stick to $d=2$, such that the corresponding control problem reads as 
    \begin{equation}\label{bracket_corollary}
        \begin{cases}
        \min_{a\in L^2([0,1]\R^d)}\ \ J(a) + \gamma  f(\xi(1),\Bar{g}), \\
        \dot{\xi}(s) = \sum_{i=1}^2 a_i(s) U_i(\xi(s)) + b(s) V(\xi(s)) ,\ s\in[0,1], \\
        \xi(0) = (1,0,\dots,0)\in G^N(\mathbb{R}^d),
        \end{cases}
    \end{equation}
where the $d$ vector fields $U_1,\ldots,U_d$ are defined as in \eqref{vec_fields} and 
    \begin{equation}\label{vec_field_bracket}
        V:{G}^N(\mathbb{R}^d) \rightarrow T {G}^N(\mathbb{R}^d),  V:\xi(s) \mapsto \xi(s) \otimes [e_1, e_2].
    \end{equation}
This means, we just add valid vector fields which are included in the underlying Lie algebra to the control system. The difference is the interpretation of the coefficient $b \in L^2([0,1],\R)$ in front of the vector field $V$. As before, we try to find a path $y\in H^1([0,1],\R^2)$ which can be related to the weight $b(s)$, similar to the case of the simple vector fields $U_1,\ldots,U_d$, where $a_i := [\dot{x}]_i$. 
To do that, we first consider the path whose derivative within the interval $[s,s+\Delta ]$ is given by 
\begin{equation}\label{bracket_path}
\dot{y} = \begin{cases}
    c e_1 & t\in[s,s+\Delta/4],\\
    c e_2 & t\in[s+\Delta/4,s+\Delta/2],\\
    -c e_1 & t\in[s+\Delta/2,s+\Delta3/4],\\
    -c e_2 &  t\in[s+3/4 \Delta,s+\Delta].
    \end{cases}
\end{equation}
The computations of its first- and second-order signature yields the following tensors:
    \begin{align*}\pi_1(S(y)_{s,s+\Delta }) = \begin{bmatrix} 
	0  \\
	  0  \\
		\end{bmatrix}, \qquad
    \pi_2(S(y)_{s,s+\Delta }) = \bigg(\frac{c \Delta }{4}\bigg)^2 \begin{bmatrix} 
	0 & 1 \\
	-1 & 0\\
		\end{bmatrix} = \bigg(\frac{c\Delta }{4}\bigg)^2 [e_1,e_2].
  \end{align*}
The last equality shows that the signature of a path whose derivative is of the type of \eqref{bracket_path} includes exactly the bracket expression $[e_1,e_2]$. Thus, the weight in front of the bracket can be related to a path in $y\in H^1([0,1],\R^2)$ as desired. Consequently, when we have found the optimal weight $b(s)$ for a $ s\in [0,1]$ in \eqref{bracket_corollary} we can derive the corresponding coefficients $c\in \R$ in the path \eqref{bracket_path} as follows. Again we assume that we solve the control problem on a discrete grid $\{0,1/D,\dots,1\}$ such that the optimal control $b(t)>0$ is fixed over a period $t\in [s,s+\frac{1}{D}]$, where $s\in \{0,1/D,\dots,1\}$. The correct weight $c:=\tilde{c}(s)$ for the path over the same period given via \eqref{bracket_path} can be derived as
    \begin{equation} \label{opt_b}
        b(t)1/D = \bigg(\frac{\tilde{c}(t)1/D}{4}\bigg)^2  \iff \tilde{c}(t) = 4\sqrt{\frac{b(t)}{1/D}}\ ,\ \forall t\in[s,s+1/D]
    \end{equation}
where $\Delta = 1/D$. The last identity relates the magnitude of the control $b(t)$ with the length of the path $y$ that induces on $G^N(\R^d)$ a movement along the direction $b(t)V(\xi)$. 
Finally, we observe that, if $b(t)<0$, it is sufficient to parametrize \eqref{bracket_path} in the opposite direction.\\
The procedure for optimizing the corresponding weight follows analogous steps as in the case without the final variable time. We list differences and similarities below. 
\begin{itemize}
    \item First of all we have to include into the energy functional $J$ the cost related to the control $b$. 
    Keeping in mind the relation \eqref{opt_b}, propose the following energy cost:
        \begin{equation*}
        J_{[\cdot]}(a,b) = \frac{1}{2} \int_0^1 |a(t)|^2 + 16D|b(t)| \,dt.  
        \end{equation*} 
    \item The optimization for the $p$ trajectory is not affected by the adaption of the energy functional, but $\frac{\partial}{\partial \xi_{i_1,\dots,i_k}} H(\xi,(a,b),p)$ changes. Therefore, we repeat part a) for the case $k\in \{2,\dots,N-1\}$ and derive the following derivative for the version of this section. Again we use the multiindex $j$ such that $|j|=k$ and $j_i$ is the $i$-th component of $j$. We stick to dimension $d=2$ for readability such that $b\in \R$ is the single coefficient in front of the vector field $V$ in \eqref{vec_field_bracket}:
    \begin{align*}
        & \frac{\partial}{\partial \xi_{j}} H(\xi,(a,b),p) \\
        &\stackrel{\eqref{Hamiltonian}-\eqref{time_derivative}}{=} \frac{\partial}{\partial \xi_{j}} \bigg(\gamma \frac{\sum_{k=0}^N \sum_{|l|=k}  (\xi_l-\Bar{g}_l)}{\sqrt{|\xi(s)-\Bar{g}|^2+1}} ( a_{j_k} \xi_{l \setminus j_k} + b\  \xi_{l \setminus \{j_k,j_{k-1}\}} \mathbbm{1}_{j_{k-1}\neq j_k} (-1) \mathbbm{1}_{j_{k-1} > j_k} \\
        &\qquad+  \langle p, \sum_{i=1}^2 a_i \xi \otimes e_i  +b (\xi \otimes e_1 \otimes e_2  - \xi \otimes e_2 \otimes e_1 ) \rangle \bigg)\\
        &=  \gamma \bigg[\bigg(a_{j_k} \xi_{j\setminus j_k} + b\  \xi_{j \setminus \{j_k,j_{k-1}\}} \mathbbm{1}_{j_{k-1}\neq j_k} (-1) \mathbbm{1}_{j_{k-1} > j_k} + \sum_{i_{k+1}=1}^2 (\xi_{ji_{k+1}}-  \Bar{g}_{ji_{k+1}}) a_{i_{k+1}} \\
        &\qquad+  \sum_{\substack{i_{k+1}, i_{k+1} =1 \\i_{k+1}\neq i_{k+1}}}^2 (\xi_{ji_{k+1}i_{k+2}}-  \Bar{g}_{ji_{k+1}i_{k+2}}) b \ (-1) \mathbbm{1}_{j_{k-1} > j_k} \bigg) \frac{1}{\sqrt{|\xi-\bar{g}|^2+1}} \\ 
        &\qquad + \sum_{k=0}^N \sum_{|l|=k}  (\xi_l-\Bar{g}_l) [ a_{l_k} \xi_{l \setminus l_k} + b\  \xi_{l \setminus \{l_k,l_{k-1}\}} \mathbbm{1}_{l_{k-1}\neq l_k} (-1) \mathbbm{1}_{l_{k-1} > l_k} ] \frac{-(\xi_j-\Bar{g}_j)}{\big(|\xi-\bar{g}|^2+1\big)^{\frac{3}{2}}} \bigg] \\
        &\qquad+ \sum_{i_{k+1}=1}^2   p_{ji_{k+1}} a_{i_{k+1}} + \sum_{\substack{i_{k+1}, i_{k+1} =1 \\i_{k+1}\neq i_{k+1}}}^2 b \ p_{ji_{k+1}i_{k+2}}  (-1) \mathbbm{1}_{i_{k+1}>i_{k+2}} 
    \end{align*}
    
    \item The minimization of the controls $a_i,\ i\in\{0,\dots,d\}$ is not affected. On the other hand, the optimization for the optimal $b$ cannot be done via the derivative, since the Hamiltonian is not smooth in $b$. However, the analogous augmented Hamiltonian for $b(s)$ given in \eqref{eq:augm_ham} consists of the sum of a quadratic function in $b(s)$ with $16D|b(s)|$. Therefore, the minimization can be carried out explicitly.
    Note that we expect to observe $b(s)=0$ for many times $s\in\{0,1/D,\dots,1\}$, since the absolute value induces sparsity, and it is multiplied by a large factor when $D\gg1$.
\end{itemize}
Finally, we have to construct the right weights ${c}(s)$ of the path given via \eqref{bracket_path} from their $b(s)$ for all $s\in \{0,1/D,\dots,1\}$ by exploiting the relation \eqref{opt_b}. For the numerical investigation, we reference to the last section.

\section{Numerical experiments}

This final section is devoted to demonstrating the capabilities of the optimal control approach to recover the shortest path from a given signature. The first part describes experiments related to section \ref{find_path}, whereas the second section tries to illustrate the improvement of the second approach. Moreover, we will use the length of the path recovered in the first section as the lower bound for the final time in the second section. Both sections include experiments that follow two steps
    \begin{enumerate}
        \item Generate a path via a simulation of a stochastic process and derive its signature.
        \item Compute the shortest path with the same signature signature as 1., and compare it to the original path.
    \end{enumerate}
In general, we deal with realizations of stochastic processes that have less regularity as paths from $BV([0,1],\mathbb{R}^d)$. However, given that we only deal with discretizations and linear interpolations between the simulation points, we do not hurt the regularity of $BV$. As the underlying process, we take the Ornstein-Uhlenbeck process, theoretically defined as:

\begin{definition}[Ornstein Uhlenbeck process]
The (OU) follows the succeeding dynamic:   
\begin{equation*} dX_t = \kappa(\theta - X_t)dt + \sigma dW_t,\end{equation*} 
where $W_t$ is a standard Brownian Motion, $\sigma,\kappa >0$ and $\theta \in \mathbb{R}$.
\end{definition}
The meaning of the parameters are as follows:
\begin{itemize}
    \item $\theta$ is the long-term mean
    \item $\kappa$ is the speed of mean reversion
    \item $\sigma$ is the standard deviation of the process
\end{itemize} 
At any given $t$ where the process value $X_t$ is below its long-term mean $X_t<\theta$ the drift is positive. The value of the drift is proportional to the product of $\kappa$ and the difference between $\theta$ and $X_t$. In summary, the drift forces the process to approach the long-term mean $\theta$.  The higher the mean reversion $\kappa$, the faster the process is forced to approach its $\theta$. 

We choose a discretization of 100 steps in the examples below, the starting value of the simulated processes are $X_0=0$ through all of our experiments. For the visualizations below we chose a parameter set of $\theta = 5, \sigma=1,\kappa = 0.5$. For both approaches for finding the shortest path, we used the same set of hyperparameters, initializations of the controls, and initial trajectory of signatures. For the experiments, we used up to 4-dimensional paths and up to 5 levels of the signature. Below we visualize our experiments for the reconstruction of up to 3-dimensional paths.

\subsection{Minimizing the energy functional}\label{sec:orig_minim_numerics}

The procedure of minimizing the energy is described precisely in section \ref{find_path}. In each iteration of the algorithm, we use a grid of $D=100$ discretization points. Moreover, due to the fact, that we integrated the hard final time constraint as a soft constraint in the problem as formulated in \eqref{gamma_opt_prob}, there is a tradeoff in the objective function while minimizing. On the one hand, minimizing $J(a)$ corresponds to the problem of finding a geodesic. On the other hand, the minimization of the second part of the objective can be translated into finding a signature that coincides with the one signature $\Bar{g}$ we wanted to approximate originally. However, the numerical experiments show that there is still a gap between the approximative signature $\xi(1)$ and $\Bar{g}$. Thus we assume that the optimization routine we propose finds a curve $s\mapsto \xi(s) \in \mathbb{X}$ that is close to a geodesic, which does not match the final target exactly. In order to fill this gap the recovered curve needs more length to reach the target point $\Bar{g}.$ Thus, we calculate the value of the length from the optimal curve $s\mapsto \xi(s) \in \mathbb{X}$ and in a second step we take this value as a starting point for the approach we suggested in Section~\ref{sec:fin_var_time} where we have to choose the length in the beginning on our own.

\subsection{Solving the final variable time problem}

In this section, we confirm our assumption and improve the numerical error when we follow the approach proposed in Section~\ref{sec:fin_var_time}. More precisely, when we switch to the variable final time problem we have to guess an initial final time. Due to the reparametrization described in \eqref{reparam_obj} the final time is related to the length of the path. Thus, we take the length of the path we recovered in the section before as a lower bound for our initial guess of the final time. As these experiments show we could always improve the error of the section before. This leads us to the claim that assuming the minimizer of the section before has too little length was right.

First, we simulate a two-dimensional trajectory of the process described above. Next, we compute the signature from it and derive the shortest path for this specific signature. We visualize this experiment by plotting the original trajectory and the one corresponding to the shortest path next to each other. 

\begin{figure}[htbp]
    \begin{minipage}[b]{0.5\textwidth}
        \centering
        \includegraphics[width=\textwidth]{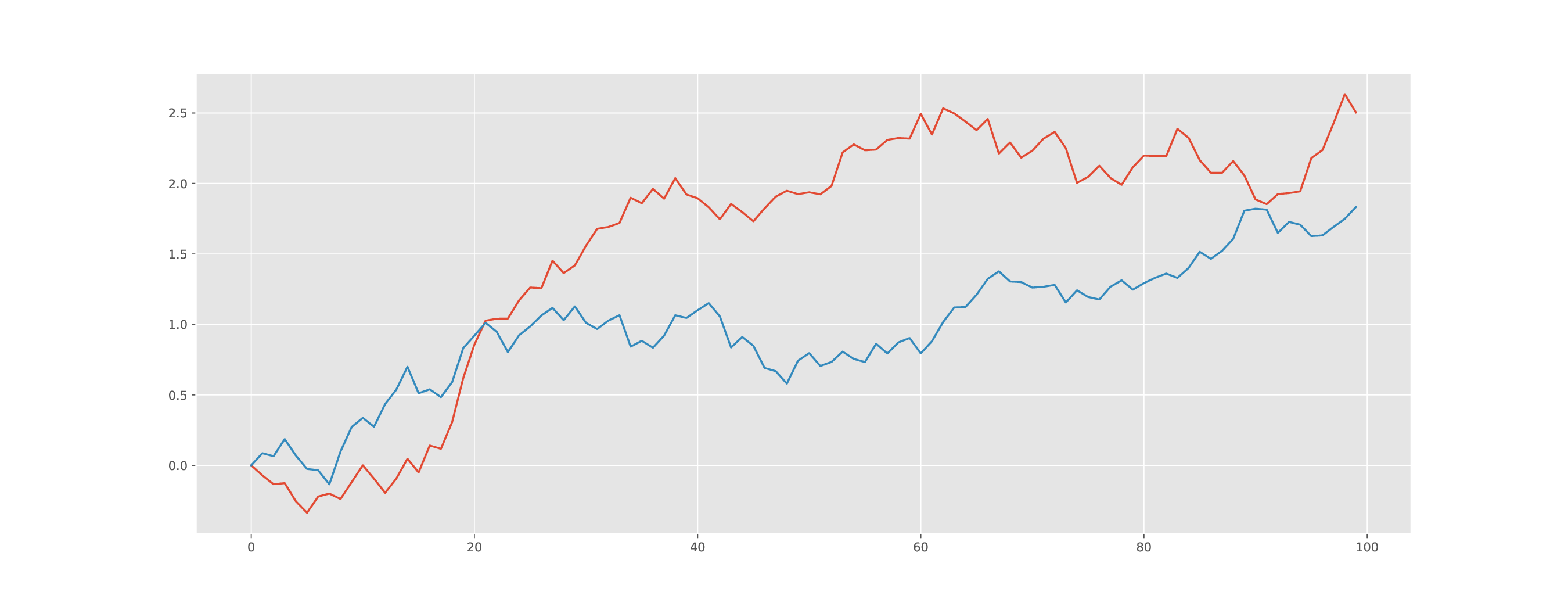}
    \end{minipage}
    \hfill
    \begin{minipage}[b]{0.5\textwidth}
        \centering
        \includegraphics[width=\textwidth]{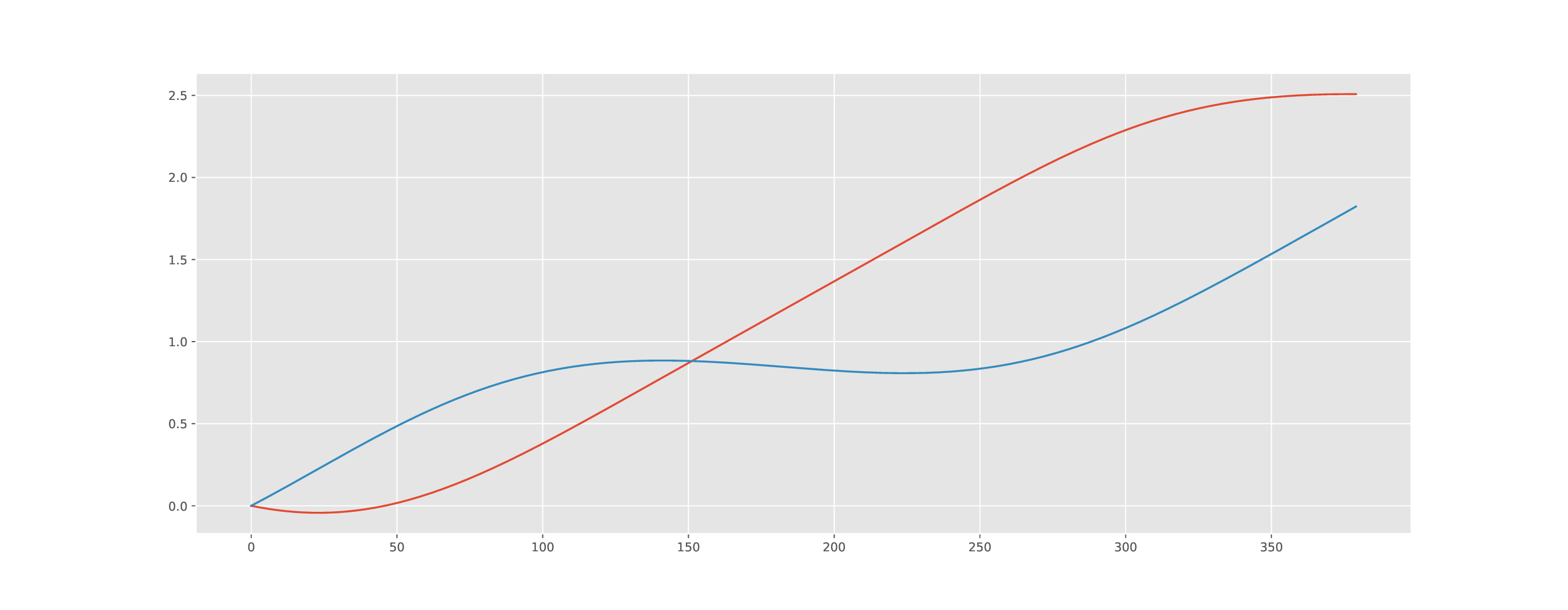}
    \end{minipage}
    \caption{Visualization of both trajectories of a 2D simulation of the OU process and the related shortest path}
        \label{fig:figure1}
\end{figure}

\begin{figure}[htbp]
    \begin{minipage}[b]{0.5\textwidth}
        \centering
        \includegraphics[width=\textwidth]{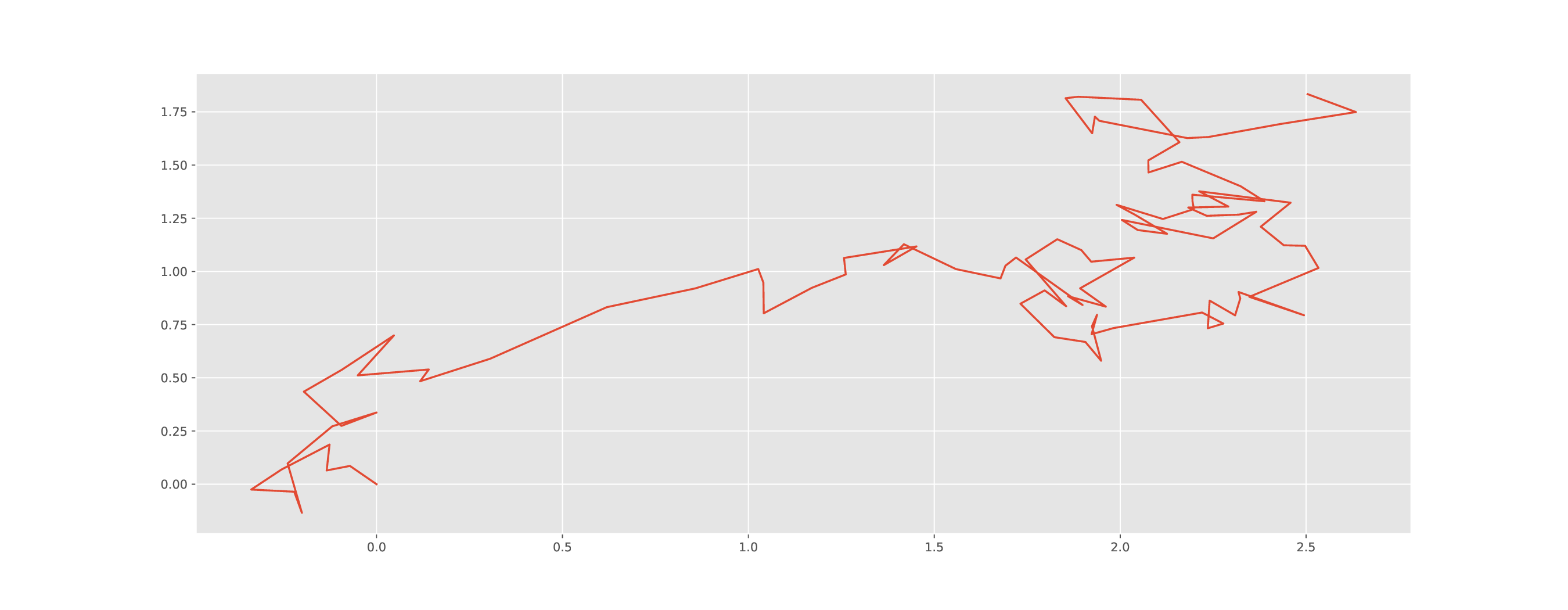}
    \end{minipage}
    \hfill
    \begin{minipage}[b]{0.5\textwidth}
        \centering
        \includegraphics[width=\textwidth]{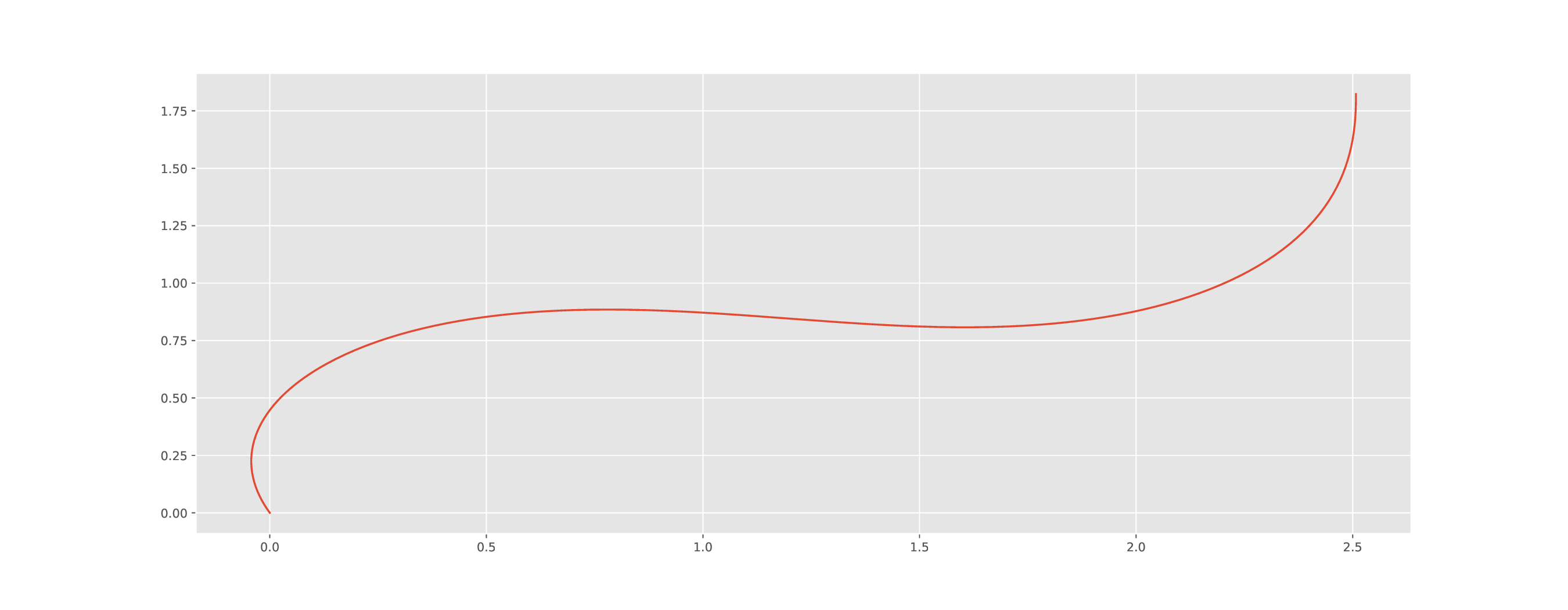}
    \end{minipage}
    \caption{Visualization of both trajectories plotted against each other of a 2D simulation of the OU process and the related shortest path}
        \label{fig:figure2}
\end{figure}

Finally, we repeat the experiment with a three-dimensional version of this process.

\begin{figure}[htbp]
    \begin{minipage}[b]{0.5\textwidth}
        \centering
        \includegraphics[width=\textwidth]{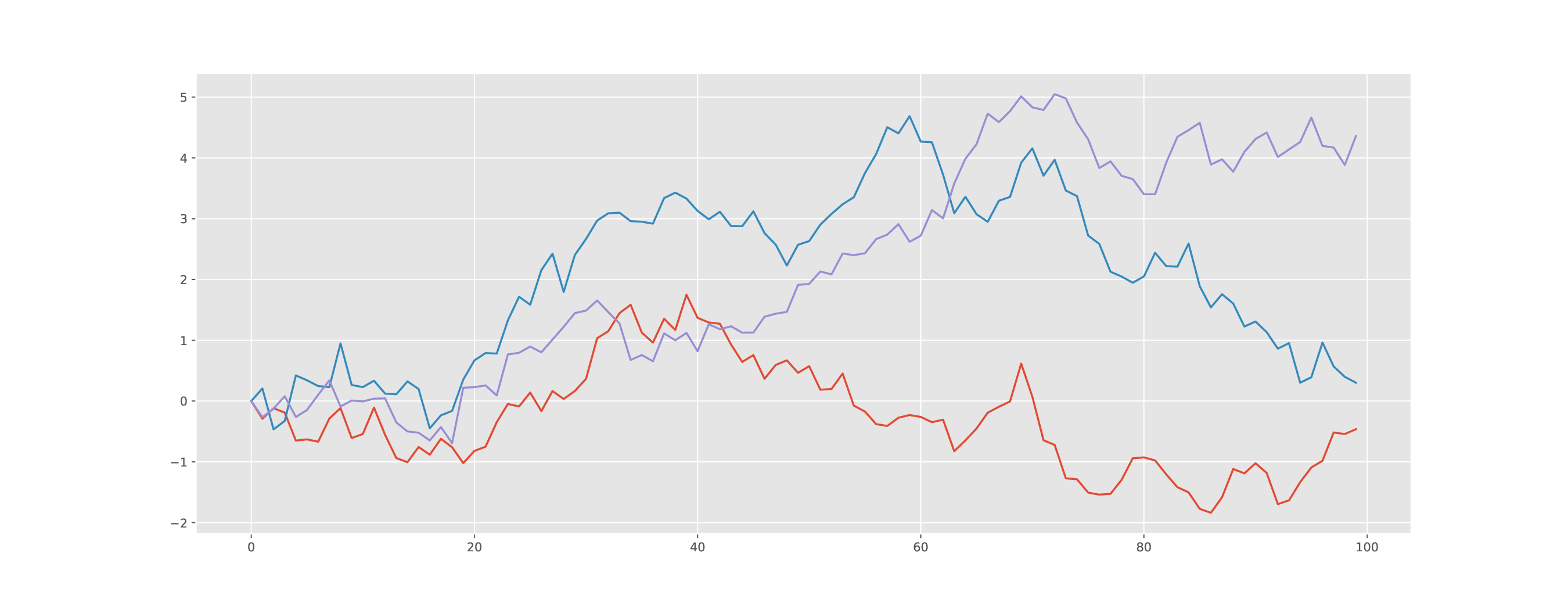}
    \end{minipage}
    \hfill
    \begin{minipage}[b]{0.5\textwidth}
        \centering
        \includegraphics[width=\textwidth]{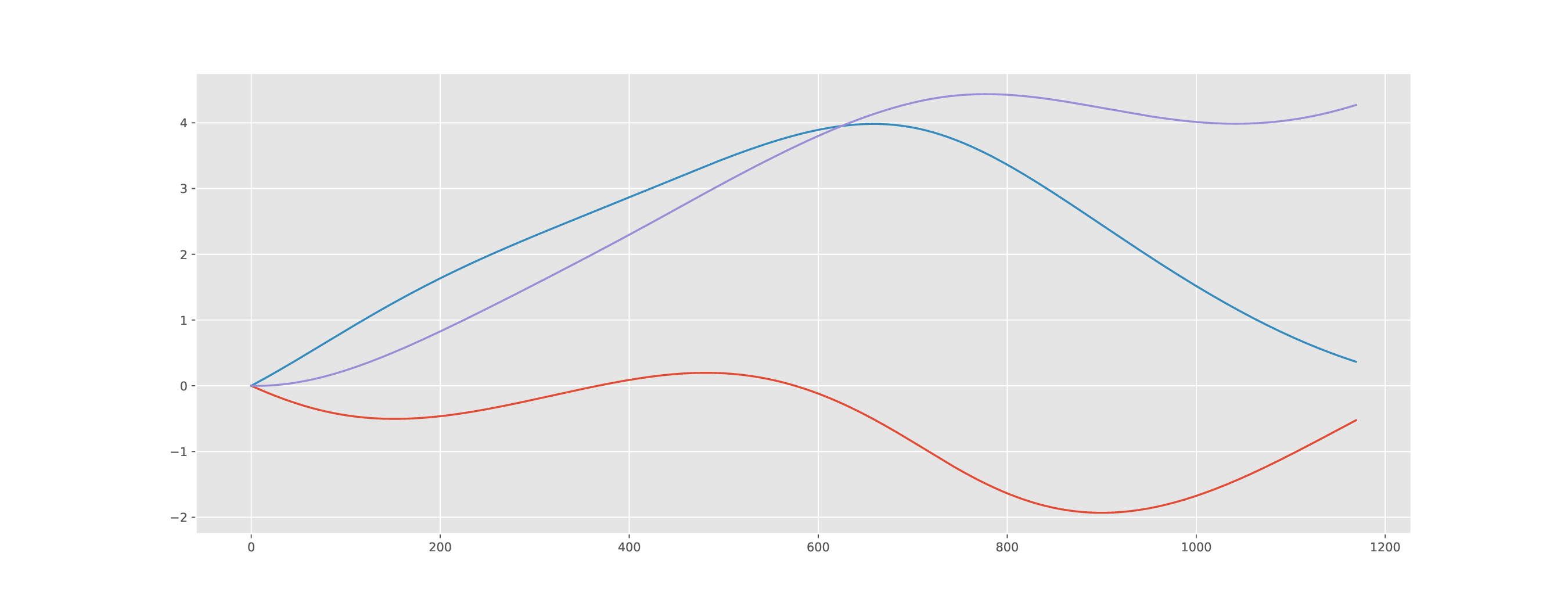}
    \end{minipage}
        \caption{Visualization of trajectories of a 3D simulation of the OU process (left) and the related shortest path (right). The graphs show the values of the components ($y$-axis) versus time ($x$-axis).}
\end{figure}
The last 3-dimensional Figure \ref{3_d_plot} plots the trajectory of the shortest path and the original path in one single figure. 

\begin{figure}\label{3_d_plot}
\begin{minipage}[b]{1\textwidth}
        \makebox[\textwidth][c]{
        \includegraphics[width=1.5\textwidth]{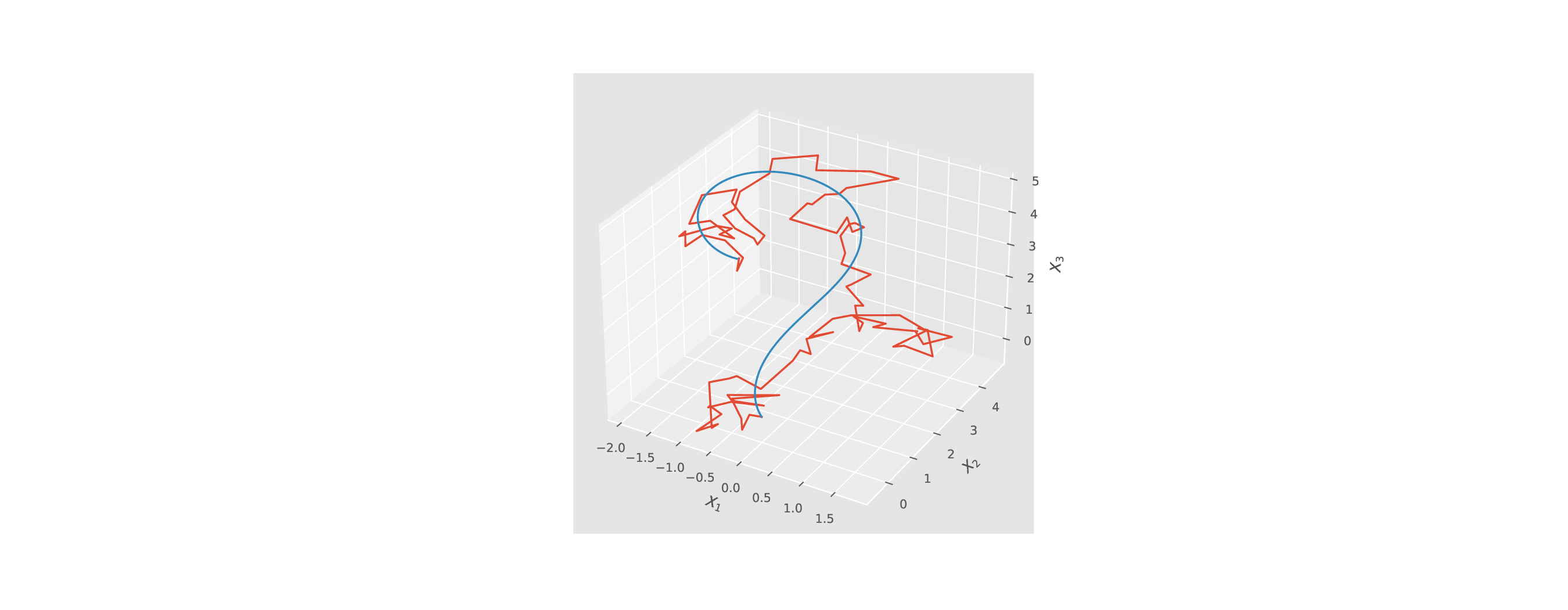}}
        \caption{Visualization of all trajectories of a 3D simulation of the OU process and the related shortest path, where we plot all trajectories against each other.}
\end{minipage}
\end{figure}

\subsection{Minimizing the Version with with Additional Vector Fields}
When considering an augmented control system that includes the bracket-type vector fields as described in \eqref{bracket_corollary}, 
the result does only very rarely change compared to Section~\ref{sec:orig_minim_numerics}. Thus, we can confirm that the weight in front of the additional bracket vector field in the control system is pushed heavily to $0$. In other words, optimal solutions prefer a sparse activation of the additional vector fields, as we suggested in Section~\ref{sec:add_vec}. For this reason, we observe a very similar result as before.

\section*{Conclusion}

In this paper, we are interested in recovering the shortest path having a prescribed signature, and we highlight the intersection between the field of optimal control and signatures. 
We address this task by formulating different optimal control problems, and we propose implementable iterative schemes based on the Pontryagin Maximum Principle.
Even though the signaure-to-path recovery has already been treated in the literature, we contributed to the field by taking advantage of the viewpoint of sub-Riemannian geometry, and by rephrasing the reconstruction as a geodesic problem.

\bmhead{Acknowledgments}

A.S. acknowledges partial support from INdAM--GNAMPA.
We want to thank two anonymous Referees whose comments helped us improve the quality of the exposition.



\appendix

\section{Proof of Theorem~\ref{gamma_conv_thm}}

As we mentioned in the paper we prove the $\Gamma$-convergence result in a slightly more general setting than in \cite{scagliotti2023gradient}, namely when the controls are allowed to be taken from the space $L^p([0,1],\R^d),\ p\in(1,\infty)$. As a matter of fact, the underlying space of curves in $G^N(\R^d)$ can be generalized to Sobolev spaces of the the form $\mathbb{X}^{1,p} = W^{1,p}([0,1],G^N(\R^d))$ with $ p\in(1,\infty)$.

\begin{proof}[Proof of Theorem~\ref{gamma_conv_thm}]
The proof can be divided into three steps.

Step~1. First of all, it is proved that a weakly convergent sequence of the controls leads to a strongly $C^0$-convergent sequence of the states, i.e. $a_m\weak_{L^p} a$ implies that $\xi_m \rightarrow_{C^0} \xi$ as $m\to\infty$, where $\xi$ is the signature solving the corresponding control system defined in \eqref{eq:control_syst} driven by the limiting control $a$. 
To prove this, we consider a sequence of states $(\xi_m)_{m>0}$ that solves the control system \eqref{eq:control_syst} along the weakly convergent sequence of controls $(a_m)_{m>0}$. 
Note that the bound in \eqref{bound_g} also holds when we consider the $L^p$-norm in place of $L^2$. Combining this observation with \eqref{assumpt_bounded_sig}, we deduce that the sequence of states contains a weakly converging subsequence $\xi_{m_n} \rightharpoonup_{W^{1,p}} \tilde{\xi}$, i.e. it is relatively compact w.r.t. to the weak topology of $\mathbb{X}^{1,p}$. 
Moreover, such a subsequence converges strongly in $C^0$ due to the compact embedding $\mathbb{X}^{1,p}\hookrightarrow C^0$. We have to show that $\tilde{\xi}$ coincides with the solution $\xi$ of the control system driven by the limit of the controls sequence $(a_m)_{m>0}$.
Since the vector fields $U_1,\ldots,U_d$ are Lipschitz-continuous, we obtain the strong convergence of the sequence $\big(U_i(\xi_{m_n})\big)_n$ w.r.t. the topology of $C^0$. 
From this fact, it descends the following weak convergence:
\begin{equation*}
    \dot \xi_{m_n} = \sum_{i=1}^d [a_{m_n}]_i U_i(\xi_{m_n}) \weak_{L^p} 
    \sum_{i=1}^d [a]_i U_i(\xi) =
    \dot{\tilde \xi}.
\end{equation*}
Recalling that $\xi(0)=\tilde \xi(0)=(1,0,\ldots,0)\in G^N(\R^d)$, we conclude that $\tilde{\xi}$ is equal to the solution $\xi$ corresponding to the control $a$. 
This argument shows that any limiting cluster point (in $W^{1,p}$ and in $C^0$) of $(\xi_m)_m$ actually coincides with $\xi$, i.e., the whole sequence $(\xi_m)_m$ converges, without needing to extract a subsequence.

Step~2. We first introduce the set
    \begin{equation*}
        A_c=\{a\in L^p([0,1],\R^d), ||a||_{L^p}\leq c\}, \ c>0
    \end{equation*}
and we show that the objective functional $\mathcal{F}_\gamma$ restricted to such a domain always admits a minimizer $a^*\in A_c$. In order to exploit the Direct Method of Calculus of Variations we need to show that $\mathcal{F}_\gamma\big|_{A_c}$ is lower semi-continuous as well as coercive. Owing to the continuity of the regularizing function and the first step we have on the one hand the convergence of $f(\xi_m(1),\bar{g})\rightarrow f(\xi(1),\bar{g})$. On the other hand, considering again the weakly convergent sequence of controls we have $\|a\|_{L^p}\leq \liminf_{m\to \infty} \|a_m\|_{L^p}$. Combining these two facts gives the lower semi-continuity. Additionally, the coercivity follows from the compactness of the domain $A_c$, such that we can deduce the existence of a minimizer of the restricted problem by the Direct Method of Calculus of Variations. 

Step~3. We are finally in position to prove that $\mathcal{F}_\gamma$ $\Gamma$-converges on the restricted domain $A_c$ to the functional $\mathcal{F}$ defined in \eqref{eq:gamma_limit}. 
Since $\mathcal{F}(a)$ does not contain the terminal cost, we can deduce that the signature corresponding to the limiting curve $a\in L^p([0,1],\mathbb{R}^d)$ has to fulfill $f(\xi(1),\Bar{g})=0$, i.e. $\xi(1)=\Bar{g}$. 
To prove the $\limsup$-condition, we set the recovery sequence constant to $a_\gamma \equiv a$ such that $\mathcal{F}_\gamma(a_\gamma)=\mathcal{F}(a)$ for all $\gamma>0$, so that the $\limsup$-inequality is naturally fulfilled. 
On the other hand, in order to establish the $\liminf$-condition, we consider w.l.o.g. a weakly convergent sequence $a_\gamma \rightharpoonup_{L^p} a$ such that $\lim_{\gamma\rightarrow \infty}\mathcal{F}_\gamma(a_\gamma)= \liminf_{\gamma\rightarrow \infty} \mathcal{F}_\gamma(a_\gamma) =C.$  According to the first step the convergence of the corresponding signatures is of the type $\xi_{\gamma} \to_{C^0} \xi$. Due to $\gamma f(\xi_\gamma(1),g) \leq \mathcal{F}_\gamma(a_\gamma)\leq C+1$ and $\gamma \to \infty $ we have $f(\xi_\gamma(1),g)\to 0$ which implies $\xi_{\gamma}(1) \to \xi(1) = \bar{g}$. Therefore, we obtain
    \begin{equation*}
        \mathcal{F}(a) = \|a\|_{L^p}^p \leq \|a_\gamma\|_{L^p}^p  \leq \liminf_{\gamma \rightarrow \infty} \mathcal{F}_\gamma(a_\gamma),
    \end{equation*}
which establishes the $\liminf$ inequality.
\end{proof}

\newpage

\end{document}